\def\R{{\mathbb R}}
\def\N{{\mathbb N}}
\def\1{{1\!\!1}}
\def\E{{\mathbb E}}
\def\P{{\mathbb P}}
\def\cal{\mathcal}
\def\eps{\varepsilon}
\newtheorem{theorem}{Theorem}
\newtheorem{proposition}{Proposition}
\newtheorem{lemma}{Lemma}
\newtheorem{remark}{Remark}
\title{Mean-field analysis of stochastic networks with reservation}
\author{Christine Fricker}
\address{(C. Fricker) DI-ENS, CNRS, INRIA de Paris, Paris, France}
\email{christine.fricker@inria.fr}
\author{Hanene Mohamed}
\address{(H. Mohamed) MODAL'X, UMR CNRS 9023, UPL, Universit\'e Paris Nanterre }
\email{hanene.mohamed@parisnanterre.fr}
\begin{document}

\subjclass[2020]{60J27, 60K25, 60K30}
\keywords{Stochastic Networks, Reservation, Mean-Field, Non-Linear Markov Process, Finite Capacity Queues, Product-Form Distribution}

%

\begin{abstract} The problem of reservation in a large distributed system is analyzed via a new mathematical model. The target application is  car-sharing systems. This model is precisely motivated by the large station-based  car-sharing system in France, called Autolib'. This system can be described as a closed  stochastic   network where the nodes are the stations and the customers are the cars. The user can reserve the car  and the parking space. 
  In the paper, we study the evolution of the system when  the reservation of parking spaces and cars  is effective for all users.
  The asymptotic behavior of the underlying stochastic  network is given when the number $N$ of stations and the fleet size $M$ increase at the same rate. The analysis involves  a Markov process on a state space with dimension of order $N^2$. It is quite remarkable that the state process describing the evolution of the stations, whose dimension is of order $N$, converges in distribution, although not  Markov, to an non-homogeneous Markov process. We prove this  mean-field convergence. We also prove, using combinatorial arguments, that the mean-field limit has a unique equilibrium measure when the time between reserving and picking up the car is sufficiently small. This result extends the case  where only the parking space can be reserved.
\end{abstract}
 \maketitle

\tableofcontents

\section{Introduction}
The paper deals with a problem of reservation in a large distributed system. Our motivation is a car-sharing system in which a fleet of cars move around and  is parked in a set of stations, mainly for electric issues
. A crucial problem is the presence of empty and full stations. In an empty station,  users cannot pick up a car, while in a full station,  also called saturated,  users cannot park the car. 
Reservation could help the user to find both a car at the departure and a parking space at the destination.
In this paper, we focus  on a  reservation policy called {\em double reservation} which is to reserve both the car and the parking space at the same time, a moment before picking up the car.  Cars and  parking spaces reservation are proposed by Autolib', the car-sharing system that existed in Paris from $2011$ to $2018$. Its fleet was composed, in July $2016$, of $3\,980$ electric vehicles, called {\em  Bluecars}, distributed in $1\;084$ stations in Paris metropolitan area with $5\,935$ charging points. More than $126\,900$ subscribers had been registered for the service. See \cite{AutolibWiki} for more details about Autolib'.



Note that if the time between the reservation and the  pick-up  is zero, the policy is to reserve only the parking space when  the car is picked up.   This policy is called {\em simple reservation}. In free-floating systems
, users cannot reserve the parking spaces as the cars are parked in the public space. These systems are outside the scope of the paper.

\subsection{Simple reservation model}
The simple reservation model can be described  as follows. It consists of  $N$ stations of finite capacity $K$ and $M_N$ cars. 
Users arrive at rate $\lambda$ in each station. A user  reserves a parking space at  a randomly chosen   destination station  when he or she picks up a car.  If it is not possible, the unhappy user leaves the system. Otherwise, after a trip with  exponential distribution of parameter $\mu$, the car is parked at its  reserved parking space in the destination station.


The system is said to be large when $N$ and $M_N$ tend to infinity    at the same order. Only the fleet size $M_N$ is of the order of $N$. The other variables such as the arrival rate per station $\lambda$ or the mean trip duration $ 1/\mu$ are bounded, so of the order of $1$.  Indeed, the increase in the number of $N$ stations can be considered as a densification of the service area, and not as an extension. There is no reason for the mean trip time to increase. The aim of studying the model when $N$ tends to infinity is to obtain an approximation of a car-sharing system with a large fleet and a large number of stations. The aim is not to study the physical extension of the service area or the number of stations. 

Let us denote by $s_N$ the ratio $M_N/N$, tending to a constant $s$ which is  {\em the average number of cars per station}. This sizing parameter $s$
is a key parameter of the system. In \cite{bourdais2020mean}, this policy is studied  in a homogeneous framework by a mean-field  approach, using a large-scale  analysis similar to that of bike-sharing systems in \cite{fricker2016incentives}. 


Due to the   the parking space reservation, the state of each station is described in \cite{bourdais2020mean} by a vector with two components: the number of cars and the number of reserved parking spaces, which is the main difference with \cite{fricker2016incentives}. The state process is $2N$-dimensional and Markovian. With standard arguments, the mean-field convergence is established. Beyond this, the analysis of the equilibrium point is much more delicate with parking space reservation. 

The aim of the paper is to prove these results extended to the double reservation model introduced as follows. In particular, our papers gives the proof of  \cite[Theorem 2]{bourdais2020mean} omitted in~\cite{bourdais2020mean} on the simple reservation model.


\subsection{Double reservation model}
Station-based car-sharing systems  such Autolib' offer the possibility of reserving a car and a parking space in the desired station  online, a moment
before actually picking up the car. In this paper, we consider a model that meets this user demand. In our model, the user  reserves a car at a given  departure station and, at the same time, a parking space at a destination station. If there is no  car or no  parking space available, the user leaves the system. Otherwise, it takes a time called {\em reservation time} between the car reservation and the car  pick-up. The reservation time has an exponential distribution with mean $1/\nu$. Then follows the  trip whose duration is assumed to have an exponential distribution  with mean $1/\mu$. Eventually the user parks the car at the reserved space in the destination station and leaves the system.

\subsection{Discussion of the model}

\noindent

{\bf{Approximation of a large system}}:
      In practice the numbers of stations and cars are finite but large. For example, on 3 July 2016,  Autolib' offered $1\, 084$  stations for $3\, 980$ electric  cars. See~\cite{AutolibWiki}. The asymptotic analysis provides an approximation of the behavior of this finite-size system.  Therefore, the dimensioning problem of finding the optimal  number of cars per station for $N$ large is  approximated by its limit value when   $N$ tends to infinity.

{\bf{Double Vs simple reservation}}: 
For an order of magnitude of the variables of our model, note that Autolib' offered $30$ minutes as the maximum reservation duration for the car and $1$ hour $30$ minutes for the maximum reservation duration of the parking space at the destination (see~\cite[pp. 20]{waserhole:tel-01176190}). Moreover, the mean trip duration was around $38$ minutes (see  \cite{RapportAutolib2013} for $2013$). Thus, the mean trip time $1/\mu$ and the mean reservation time $1/\nu$ are comparable. This fully justifies the motivation of studying the double reservation policy.

\noindent

{\bf{Extension to an heterogeneous model}}: 
For sake of simplicity, our choice of a homogeneous network is motivated by the mean-field approach used in our study. A homogeneous framework is generally presented as the simplest, allowing the difficulties of the model to be highlighted and simple explicit results to be obtained. It is still possible to extend this study to a heterogeneous model using clusters by grouping stations with similar parameters in the real system. Since the trip times are random with an exponential distribution of parameter $\mu$, the heterogeneity of the trips is carried by the randomness. Finally, network heterogeneity is carried by the arrival rate depending on the cluster and the probability to reserve a parking space in a station of a given cluster (for the homogeneous model, this probability is $1/N$). See \cite{fricker2012mean} for details. For models with state process having a product-form invariant measure, the heterogeneous framework is quite natural. In the context of bike and car-sharing systems, see \cite{fricker2017equivalence, fricker2021stochastic}. However, our model does not fit  into this framework. For such models, the mean-field approach remains effective, hence our choice of a homogeneous model.

\noindent

{\bf{Extension to  heterogeneous users}}:  
To take account of different reservation behaviors of  users, two classes of users can be considered: users who make  reserve early  and users who reserve at the  last minute. This yields to introduce two different parameters $\nu_1$ and $\nu_2$ for the exponential distribution of the reservation time. This extension remains within a  Markovian framework.

Furthermore, the real-world trip time distribution can be fitted by an Erlang distribution. 
Replacing the exponential distribution of the trip time by an Erlang distribution, i.e. the distribution of the sum of i.i.d. random variables with exponential distribution, a state process capturing the phases of the  Erlang distribution is still Markovian.

\subsection{Main results and contribution}
\subsubsection{Mean-field approach}
A three-dimensional state space (reserved cars, reserved spaces and available cars) for each station does not allow this process to be fully described as a Markov process, because the parking space reservation time is the sum of two exponentially distributed variables (car pick-up time and  trip time). This leads to  the introduction of a fourth variable to distinguish between parking spaces reserved by {\em users not yet travelling} and {\em users travelling}. But even the associated state process is not Markov. As we will see on  the transitions described in Section~\ref{SectionModel}, the Markov process associated to this model, denoted by $(X^N(t))$, is more complicated  and especially of dimension $(N+1)^2$, where $N$ is the total number of stations. Therefore, for a given station, the state space is of dimension $N$, which is not suitable for an asymptotic  analysis when  $N$ tends to infinity. To solve this problem, we  introduce the process, denoted by $(Z^N(t))$, which describes the state of each station $i$, $1 \leq i \leq N$, as a function of the Markov process $(X^N (t))$.   Indeed $Z^N_i(t)$ has four components,
\begin{itemize}
\item[-] $R_i^{r,N}(t)$ the number of  parking places reserved by  non-driving users at station $i$ at time $t$,
\item[-] $R_i^{N}(t)$ the number of  parking places reserved by users driving at station $i$ at time $t$,
\item[-]  $V_i^{N}(t)$  the number of  cars available at station $i$ at time $t$ and
\item[-]  $V_i^{r,N}(t)$  the number of reserved cars at station $i$ at time $t$.
\end{itemize}

As  previously mentioned,  process $(Z^N(t))= \left(R^{r,N}(t), R^{N}(t), V^{N}(t), V^{r,N}(t)\right)$ is of dimension $4N$ but is not a Markov process. The loss of the Markov property is the price to pay to this   dimension reduction. Nevertheless, it is remarkable and  we prove that, despite this non-Markovian description, the state $(Z^N_i(t))$ of a given station $i$  ($1\leq i \leq N$)  converges in distribution, as N goes to infinity, to a non-homogeneous Markov process $(\overline{Z}(t))=(\overline{R^r}(t),\overline{R}(t),\overline{V}(t),\overline{V^r}(t))$ satisfying the following Fokker--Planck equation

\begin{align}\label{FPequation}
   \frac{d}{dt} \E\left(f(\overline{Z}(t))\right) &=
   \lambda \P( \overline{V}(t)>0)\E\left( (f(\overline{Z}(t)+e_1)-f(\overline{Z}(t)) 1_{\{ \bar{S}(t)<K\}} \right) \nonumber\\
   &+\nu \E\left((f(\overline{Z}(t)+e_2-e_1)-f(\overline{Z}(t)))1_{\{\overline{R^r}(t)>0\}}\right)\nonumber\\
 & +\mu \E\left((f(\overline{Z}(t)+e_3-e_2)-f(\overline{Z}(t)))1_{\{\overline{R}(t)>0\}}\right)\nonumber\\
 & + \lambda \P(\overline{S}(t)<K)\E\left((f(\overline{Z}(t)+e_4-e_3)-f(\overline{Z}(t)))1_{\{\overline{V}(t)>0\}}\right)\\
&  +\nu \E\left((f(\overline{Z}(t)-e_4)-f(\overline{Z}(t)))1_{\{\overline{V^r}(t)>0\}}\right)\nonumber
\end{align}
with $e_1=(1,0,0,0)$,  $e_2=(0,1, 0,0)$, $e_3=(0, 0,1,0)$, $e_4=(0, 0,0,1)$, $f$ a function with finite support on $\N^4$ and 
$\overline{S}(t) = \overline{R^{r}}(t) + \overline{R}(t) + \overline{V}(t)) + \overline{V^{r}}(t)$
the limiting number of unavailable parking spaces at station $i$ at time $t$. The asymptotic process $(\overline{Z}(t))=((\overline{R^r}(t),\overline{R}(t),\overline{V}(t),\overline{V^r}(t))$ is a jump process with time-dependent rates, the so-called McKean--Vlasov process. This mean-field convergence theorem is not standard at all. Indeed, the mean-field limit is usually obtained for a Markov state process.

\subsubsection{The equilibrium} For non-homogeneous Markov processes, recall that there can be several invariant measures. We prove that there is a unique invariant measure in a restricted framework.
The proof is based on  three main arguments. First, by applying queueing theory, the McKean--Vlasov process on the basic state space is identified with a tandem of four queues with an  invariant measure of explicit product-form.  This explicit product-form allows us to change the problem of existence and uniqueness of the invariant measure of  the non-homogeneous Markov process to the same problem for a fixed point equation in dimension $4$.  Then, by  straightforward simplifications, the last fixed point equation is reduced to dimension $2$. This simplification is straightforwardly proved by the queueing interpretation of the limiting McKean--Vlasov process.   Finally, the  global inversion theorem and a monotonicity property allow us to conclude. The last two arguments are based on combinatorial calculations.  Monotonicity is just proved when the mean reservation time is sufficiently  small. We are convinced that  this assumption is  technical  but the proof needs other tools.  This monotonicity property is first guessed on the tandem of  queues. It is a main benefit of the queueing interpretation of the limiting process.


\subsection{Related works} 
 For sharing systems, despite the need of analysis of these stochastic systems, most of the literature concerns operation research and data analysis. Probabilistic models have been proposed and analyzed for bike-sharing systems \cite{fricker2016incentives, fricker2017equivalence}, where usage does not allow reservation. As far as we know, there is no stochastic analysis for car-sharing systems with reservation before \cite{bourdais2020mean}.

For car-sharing systems, a first part of the literature investigates the location problem. In Boyaci et al. \cite{boyaci2015optimization}, OR optimization is used to plan an efficient car-sharing system in terms of the number, location and capacity of stations and fleet size, applied to the case  of Nice, France.

Vehicle redistribution  and staff rebalancing is a big issue in car-sharing systems (see for example \cite{nourinejad2015vehicle}). For simple reservation, called complete parking reservation (CPR),  Kaspi et al. 
  show by simulation in \cite{kaspi2014parking} and then with OR techniques in \cite{kaspi2016regulating} that CPR outperforms NR (no reservation) for a specific user-oriented metric. This metric is the  excess time   users spend in the system due to lack of cars or parking spaces, i.e. the difference between  actual time and   ideal  trip time, including walking times to  stations.   Paper~\cite{deng2020fleet} does not  investigate the reservation impact, but the charging problem. A queueing analysis for dimensioning the fleet size is proposed for a closed network  taking account car charging. 

Despite the potential of car sharing, even data analysis  remains largely unexplored. This is also due to the lack of data provided by the operators. In \cite{boldrini2016characterising}, Boldrini et al.  exploit one month (April 2015) of publicly available data from Autolib', in Paris, France, with $960$ stations and  $2\,700$ electric cars at this time. We get an idea of the average car pickup rate and the availability of cars at a station. Furthermore, a dichotomy between Paris and the suburbs is highlighted.

Mean-field is an efficient tool for studying the behavior of large distributed systems or interacting systems  in many different domains fo applications. These systems have  both a large number of nodes and customers (particles, cars, etc).  
To the best of our knowledge, our large-scale stochastic analysis is the first  for   a stochastic model of car sharing systems with double reservation. Our first main result is that the non-Markovian state process for a given station converges to a non-homogeneous Markov process which is not standard. Furthermore, we note that this limiting Markov process  can be described using a simple queueing system. This result was first obtained by simulation in~\cite[Section 5]{bourdais2015stochastic}. Indeed, in \cite{bourdais2015stochastic},   an artificial   Markov model on the state of the stations, called the approximate model process, is introduced for the double reservation. Its mean-field limit at equilibrium fits with the real  dynamics at equilibrium, which allows the authors to guess such an outcome. The same phenomenon is proved in an entirely different framework, for a  model of a network with failures in \cite{aghajani2018large}. Note that the framework in \cite{aghajani2018large} induces simultaneous jumps, which make the proofs more technical. Our paper discusses a simpler framework which focuses on the main arguments and it provides the proof of the  result  expected in~\cite{bourdais2015stochastic}.   Like our model, the celebrated Gibbens-Hunt-Kelly model exhibits strong interactions (\cite{gibbens1990bistability}). In \cite{graham1993propagation}, it is proved that these interactions disappear  for the mean-field limit. Because of these three models, we believe that this methodology can be useful in many other contexts. A main contribution of the paper relies on the result of uniqueness of the equilibrium point in high dimension (dimension  4), thanks to a nice  interpretation in terms of queues. As far as we know, there is no such  result in the literature.

\subsection*{Outline of the paper}
  In Section  \ref{SectionModel},  the Markov process $(X^N(t))$ associated to our model is defined and 
 the stochastic evolution equations are given. In Section~\ref{SectionAsymptoticProcess},  the second process $(Z^N(t))$  describing the state of the stations is introduced. A heuristic computation of its McKean--Vlasov asymptotic process is derived. Theorem~\ref{McKean-Vlasov} which establishes the existence and uniqueness of  this stochastic process is proved.  Section~\ref{SectionMean-FieldLimit}  is devoted to  Theorem \ref{mean-field} giving the mean-field convergence for $(Z^N(t))$ and highlights the probabilistic interpretation of the Mckean--Vlasov process.  Section \ref{SectionEquilibrium}  analyses of its invariant distribution.


\section{The  model} \label{SectionModel}
In this section, we describe the dynamics of our stochastic model. We recall that the system has $N$ stations of capacity $K$. The Markov process that gives  the dynamics of the system is the following,
\begin{align*}
  X(t)=(X^N_{i,j}(t),0\leq i,j\leq N)
\end{align*}
where, for $1\leq i,j\leq N$,  at time $t$,

\begin{itemize}
\item[-] $X^N_{i,j}(t)$ is the number of cars  reserved at $i$ with  parking space reserved at $j$,

\item[-]  $X^N_{0,j}(t)$ is the number of   parking spaces reserved at $j$ by users driving

\item[-] and $X^N_{i,0}(t)$ is the number of   cars available at station $i$.
\end{itemize}

The total number of cars is $M_N$ and the fleet size parameter,  defined as the mean number of cars per station if they are all parked, is
\begin{align*}
  s \stackrel{def.}{=}\lim_{N\rightarrow +\infty} \frac{M_N}{N}.
\end{align*}

\subsection{Transitions of the Markov  process}
The transitions of the Markov  process $(X^N(t))$ are  reservations, car picks up and car returns. 
\begin{itemize}
\item[-] {\em Reservations}. At station $i$, at rate $\lambda$,   an available car is replaced by a reserved car and an available parking space  is reserved at the same time at a random station, say  $j$. If there is either no available car at $i$ or no available parking space at $j$, the reservation fails. If   a reservation  is made at time $t$,
\begin{align*}
\begin{cases}
X^N_{i,0}(t)=X^N_{i,0}(t^-)-1 & \text { for } 1\leq i\leq N,\\
X^N_{i,j}(t)=X^N_{i,j}(t^-)+1 & \text { for  } 1\leq i, j\leq N.\\
\end{cases}
\end{align*}
where the  limit from the left of function $f$ at $t$ is denoted by $f(t^-)$. 
  
\item[-] {\em Car picks up}. After a reservation, the user takes a time with an exponential distribution with parameter $\nu$ to come and pick the car. So, at rate $\nu$, each  car reserved at station $i$ disappears and the associated  parking space reserved at station $j$ moves to a  parking space reserved by a driving user. When  taking such a car  at time $t$,
\begin{align*}
\begin{cases}
  X^N_{i,j}(t)=X^N_{i,j}(t^-)-1, & \text { for  } 1\leq i,j\leq N,\\
  X^N_{0,j}(t)=X^N_{0,j}(t^-)+1 & \text { for  } 1\leq j\leq N,\\
\end{cases}
\end{align*}

\item[-] {\em Car returns}. After a trip with exponential distribution with parameter $\mu$, a user driving returns his car. Thus, at rate $\mu$, each parking space at station $j$ reserved by a  user  driving his car becomes an available car. When  a car is returned at station $j$ at time $t$,
\begin{align*}
\begin{cases}
 X^N_{0,j}(t)=X^N_{0,j}(t^-)-1,\\
X^N_{j,0}(t)=X^N_{j,0}(t^-)+1.
\end{cases}
\end{align*}
\end{itemize}
Note that $(X^N(t))$  is an irreducible Markov process on the finite state space
\begin{align}\label{statespace}
  \{x=(x_{i,j})\in \N^{(N+1)^2}, \sum_{j=0}^N x_{i,j}+ \sum_{j=0}^N x_{j,i}\leq K, \sum_{0\leq i,j\leq N}^N x_{i,j}=M_N\},
\end{align}
where $K$ is the finite capacity  of each station, thus $(X^N(t))$ is ergodic.

\subsection{Stochastic evolution equations} The dynamics of $(X^N(t))$ can be given in terms of stochastic integrals with respect to Poisson processes. Let us introduce the following notations.
\begin{itemize}
\item[-] A Poisson process on $\R_+$ with parameter $\xi$ is denoted by $\cal{N}_{\xi}$. A sequence of such i.i.d.  processes is denoted by $(\cal{N}_{\xi,i}, i\in \N)$.\\
\item[-] A Poisson process on $\R_+^2$ with intensity $\xi dt dh$ is denoted by $\overline{\cal{N}}_{\xi}$. A sequence of such i.i.d.  processes is denoted by $(\overline{\cal{N}}_{\xi,i}, i\in \N)$.\\
\item[-] A marked Poisson process $(t_n,U_n)$, where $(t_n,n\in \N)$ is a Poisson process on $\R_+$ with parameter $\xi$ and $(U_n, n\in \N)$  is a sequence of i.i.d random variables with uniform distribution on $\{1,\ldots,N\}$, is denoted by
$\cal{N}_{\xi}^{U,N}$. Note that, for $1\leq i \leq N$, $\cal{N}_{\xi}^{U,N}(.,\{i\})$
is a Poisson process on $\R_+$ with parameter $\xi/N$ and $\cal{N}_{\xi}^{U,N}(.,\N)$ is a Poisson process on $\R_+$ with parameter $\xi$.
\end{itemize}

Let us introduce the following   independent point processes.  A reservation of a car in station $i$  is a point of a Poisson process $\cal{N}_{\lambda,i}^{U,N}$ on $\R_+^2$.
  For  reservations of both a car  at station $i$ and a parking space  at  station $j$, the times from the moment the user makes a reservation to the moment the car is picked up  form a  Poisson process $\cal{N}_{\nu,i,j}$  on $\R_+$. We need a sequence of such i.i.d processes $(\cal{N}_{\nu,i,j,l},l\in \N)$  as cars are picked up independently. And the same for the trip times of cars returned at station $j$, associated to a sequence $(\cal{N}_{\mu,j,l},l\in \N)$ of independent Poisson processes.

Using the previous  notations, process $(X^N(t))$ is given by the following stochastic differential equations. For $1\leq i, j \leq N$ and $t\geq 0$,
\begin{align}\label{evolution1}
  d X^N_{i,j}(t)=  &-\sum_{l=1}^{\infty}\mathbf{1}_{\{l\leq X^N_{i,j}(t^-)\}}\cal{N}_{\nu,i,j,l}(dt) \\
  &+ \mathbf{1}_{\{X^N_{i,0}(t^-)>0,\; \sum_{k=0}^N \left(X^N_{k,j}(t^-)+X^N_{j,k}(t^-)\right)<K\}}\; \cal{N}_{\lambda,i}^{U,N}(dt,\{j\}),\nonumber
  \end{align}
  \begin{align}\label{evolution2}
  d X^N_{0,j}(t)= &\sum_{l=1}^{\infty}\sum_{i=1}^N \mathbf{1}_{\{l\leq X^N_{i,j}(t^-)\}}\cal{N}_{\nu,i,j,l}(dt)-\sum_{l=1}^{\infty}\mathbf{1}_{\{l\leq X^N_{0,j}(t^-)\}}\cal{N}_{\mu,j,l}(dt)
  \end{align}
  and
  \begin{align}\label{evolution3}
    d X^N_{i,0}(t)=&-\sum_{j=1}^N \mathbf{1}_{\{X^N_{i,0}(t^-)>0,\; \sum_{k=0}^N \left(X^N_{k,j}(t^-)+X^N_{j,k}(t^-)\right)<K\}}\; \cal{N}_{\lambda,i}^{U,N}(dt,\{j\})\\
   &+ \sum_{l=1}^{\infty}\mathbf{1}_{\{l\leq X^N_{0,i}(t^-)\}}\cal{N}_{\mu,i,l}(dt). \nonumber
  \end{align}

  \section{The asymptotic process} \label{SectionAsymptoticProcess}
\subsection{Introduction to the asymptotic process}
Some other notations are needed for that.
  The state of each station $i$ is given by the quadruplet 
  \begin{align*}
    Z_i^N(t)=(R^{r,N}_i(t),R^{N}_i(t),V^{N}_i(t),V^{r,N}_i(t))
  \end{align*}
  where, at node $i$ at time $t$,
  \begin{itemize}
  \item[-] $R^{r,N}_i(t)$ is the number of   parking spaces reserved by  non-driving users,
    
  \item[-]  $R^N_{i}(t)$ is the number of  reserved parking spaces by users driving,
  
  \item[-] $V^N_i(t)$ is the number of available cars,
  
  \item[-] $V^{r,N}_i(t)$  is the number of reserved cars.
  \end{itemize}
  \noindent
  Process $(Z^N(t))$ takes values on
  \begin{align*}
    \cal{S}^N=\{(w_i,x_i,y_i,z_i)_{1\leq i\leq N}, (w_i,x_i,y_i,z_i)\in \Sigma_K, \sum_{i=1}^N w_i+x_i+y_i+z_i=M_N  \}
  \end{align*}
  where
  \begin{align*}
    \Sigma_K=\{(w,x,y,z) \in \N^4, w+x+y+z\leq K \}.
  \end{align*}
\noindent
 Moreover, let us denote by
 \begin{align*}
   S^N_i(t)=R^{r,N}_i(t)+R^{N}_i(t)+V^{N}_i(t)+V^{r,N}_i(t)
 \end{align*}
 the number of unavailable parking spaces at station $i$ at time $t$. 
Note that $ S^N_i(t)$  is a function of $Z^N_i(t)$.
  This process $(Z^N(t))$ gives some refined state of the stations and can be obtained as a function of the Markov process $(X^N(t))$ by
 \begin{align*}
   R^{r,N}_i(t)&= \sum_{j=1}^N X^N_{j,i}(t), &  R^{N}_i(t)=  X^N_{0,i}(t),\\
   V^{N}_i(t)&=  X^N_{i,0}(t), & V^{r,N}_i(t)= \sum_{j=1}^N X^N_{i,j}(t).
 \end{align*}

 So the evolution equations of $(Z^N(t))$ can be obtained from equations~\eqref{evolution1}--\eqref{evolution3} as follows
 \begin{align}
   d R^{r,N}_i(t)= &\sum_{j=1}^N \mathbf{1}_{\{V^N_{j}(t^-)>0,\; S^N_{i}(t^-)<K\}}\; \cal{N}_{\lambda,j}^{U,N}(dt,\{i\}) \label{state1}\\
   &-\sum_{j=1}^N \left( \sum_{l=1}^{\infty}\mathbf{1}_{\{l\leq X^N_{j,i}(t^-)\}}\cal{N}_{\nu,j,i,l}(dt)\right) , \nonumber\\
  d R^{N}_i(t)=  &\sum_{l=1}^{\infty}\sum_{j=1}^N \mathbf{1}_{\{l\leq X^N_{j,i}(t^-)\}}\cal{N}_{\nu,j,i,l}(dt)-\sum_{l=1}^{\infty}\mathbf{1}_{\{l\leq R^N_i(t^-)\}}\cal{N}_{\mu,i,l}(dt),\\
   d V^{N}_i(t)=& \sum_{l=1}^{\infty}\mathbf{1}_{\{l\leq R^N_i(t^-)\}}\cal{N}_{\mu,i,l}(dt)\\
   &-\sum_{j=1}^N \mathbf{1}_{\{V^N_i(t^-)>0,\; S^N_j(t^-)<K\}}\; \cal{N}_{\lambda,i}^{U,N}(dt,\{j\}), \nonumber\\  
 d V^{r,N}_i(t)= & \sum_{j=1}^N \mathbf{1}_{\{V^N_i(t^-)>0,\; S^N_j(t^-)<K\}}\; \cal{N}_{\lambda,i}^{U,N}(dt,\{j\})\label{state4}\\
  &-\sum_{j=1}^N \left( \sum_{l=1}^{\infty}\mathbf{1}_{\{l\leq X^N_{i,j}(t^-)\}}\cal{N}_{\nu,i,j,l}(dt)\right) .\nonumber
\end{align}

This process $(Z^N(t))$, in dimension $4N$, is not a Markov process because the evolution equations for $(Z^N(t))$ are not autonomous. They depend on the Markov process $(X^N(t))$ which lives in dimension $(N+1)^2$. See equation~\eqref{statespace}. 
We introduce process $(Z^N(t))$ because it is sufficient to capture the performance of the system, such as the fact that a station is empty or full. The quite remarkable property is that the limit  as $N$ gets large of $(Z^N(t))$ is a non-linear Markov process. We will present this process in this section and   prove the convergence result in the next section.

\subsection{Heuristic computation of the asymptotic process}
The  proof of the   convergence will be given in the next section. Here we show how we can guess the asymptotic process.
Suppose that, for $1\leq i\leq N$, $(Z^N_i(t))$ converges in distribution to some process
\begin{align*}
 (\overline{Z}(t))=(\overline{R^r}(t),\overline{R}(t),\overline{V}(t),\overline{V^r}(t)).
\end{align*}
 Let $\cal{P}_i^N$ be the random measure  on $\R_+$ defined by
\begin{align}\label{defP}
  \cal{P}_i^N([0,t])= \int_0^t \sum_{j=1}^N \left( \sum_{l=1}^{\infty}\mathbf{1}_{\{l\leq X^N_{j,i}(s^-)\}}\cal{N}_{\nu,j,i,l}(ds)\right).
\end{align}

It shows that $\cal{P}_i^N$ is a counting process (with jump size $1$) on $\R_+$ and its compensator is
\begin{align*}
  \nu  \int_0^t \sum_{j=1}^N  X^N_{j,i}(s)\; ds=  \nu  \int_0^t R^{r,N}_i(s)\; ds.
\end{align*}
See Robert \cite[Proposition A.9]{robert2013stochastic} for example. Thus, due to the convergence in distribution of $(Z^N_i(t))$ and the standard results on convergence of point processes,  $\cal{P}_i^N$ converges to an non-homogeneous Poisson process $\cal{P}^{\infty}$ with intensity $(\nu \overline{R^r}(t))$. It can be written as follows
\begin{align*}
  \cal{P}^{\infty}(dt)= \int_{\R_+} \mathbf{1}_{\{0\leq h\leq \overline{R^r}(t^-)\}}\overline{\cal{N}}_{\nu,1}(dt,dh)
\end{align*}
where, with our notations, $\overline{\cal{N}}_{\nu,1}$ is a Poisson process with intensity $\nu dh dt$,
using  the characterization of a Poisson process by the  martingale of its stochastic integral.

Following the same lines, it gives also that $\tilde{\cal{P}}_i^N$,  the random measure  on $\R_+$ defined by
\begin{align}\label{defPtilde}
 \tilde{ \cal{P}}_i^N([0,t])= \int_0^t \sum_{j=1}^N \left( \sum_{l=1}^{\infty}\mathbf{1}_{\{l\leq X^N_{i,j}(s^-)\}}\cal{N}_{\nu,i,j,l}(ds)\right)
\end{align}
is a counting process (with jump size $1$) on $\R_+$ with compensator 
\begin{align*}
  \nu  \int_0^t \sum_{j=1}^N  X^N_{i,j}(s)\; ds=  \nu  \int_0^t V^{r,N}_i(s)\; ds.
\end{align*}
It converges to an non-homogeneous Poisson process $\tilde{\cal{P}}^{\infty}$ with intensity $(\nu \overline{V^r}(t))$, i.e.
\begin{align*}
  \tilde{\cal{P}}^{\infty}(dt)= \int_{\R_+} \mathbf{1}_{\{0\leq h\leq \overline{V^r}(t^-)\}}\overline{\cal{N}}_{\nu,2}(dt,dh).
\end{align*}

\begin{remark}\label{rem} Point processes $\overline{\cal{N}}_{\nu,1}$ and $\overline{\cal{N}}_{\nu,2}$ are independent because   $\cal{P}_i^N$ (respectively $\tilde{ \cal{P}}_i^N$) is a function of  $(\cal{N}_{\nu,i,j,.},j)$ (respectively $(\cal{N}_{\nu,j,i,.},j)$), where $(\cal{N}_{\nu,i,j,.},j\not =i)$ and $(\cal{N}_{\nu,j,i,.},j\not =i)$ are independent. 
\end{remark}

Then let us consider  the random  measure $\cal{Q}_i^N$ on $\R_+$ defined by
\begin{align}\label{defQ}
    \cal{Q}_i^N([0,t])= \int_0^t \sum_{j=1}^N \mathbf{1}_{\{V^N_{j}(s^-)>0,\; S^N_{i}(s^-)<K\}}\; \cal{N}_{\lambda,j}^{U,N}(ds,\{i\}) 
\end{align}
with compensator
\begin{align*}
  \lambda  \int_0^t \mathbf{1}_{ \{S^N_{i}(s)<K\}} \frac{1}{N} \sum_{j=1}^N \mathbf{1}_{\{V^N_{j}(s)>0\}}  ds.
\end{align*}
Heuristically, by asymptotic independence of stations  and the law of large numbers, as $N$ tends to infinity, 
\[
\frac{1}{N} \sum_{j=1}^N \mathbf{1}_{\{V^N_{j}(t)>0\}}\rightarrow \P(\overline{V}(t)>0).
\]
Thus  formally, as $N$ tends to $+\infty$, $\cal{Q}_i^N$ converges to an non-homogeneous Poisson process  $\cal{Q}^{\infty}$ with intensity $(\lambda \mathbf{1}_{ \{\overline{S}(t)<K\}}\P(\overline{V}(t)>0))$.
In other words, 
\begin{align*}
  \cal{Q}^{\infty}(dt)=\int_{\R_+} \mathbf{1}_{\{0\leq h\leq \mathbf{1}_{\overline{S}(t^-)<K\}}\P(\overline{V}(t^-)>0)\}}\overline{\cal{N}}_{\lambda,1}(dt,dh).
\end{align*}
With exactly the same work, 
$\tilde{\cal{Q}}_i^N$ on $\R_+$ defined by
\begin{align}\label{defQtilde}
   \tilde{ \cal{Q}}_i^N([0,t])= \int_0^t \sum_{j=1}^N \mathbf{1}_{\{V^N_{i}(s^-)>0,\; S^N_{j}(s^-)<K\}}\; \cal{N}_{\lambda,i}^{U,N}(ds,\{j\}) 
\end{align}
formally converges in distribution when  $N$ tends to $+\infty$ to the non-homogeneous  Poisson process $\tilde{\cal{Q}}^{\infty}$  with intensity $(\lambda \mathbf{1}_{ \{\overline{V}(t)>0\}}\P(\overline{S}(t)<K))$ also defined by
\begin{align*}
  \tilde{ \cal{Q}}^{\infty}(dt)=\int_{\R_+} \mathbf{1}_{\{0\leq h\leq \mathbf{1}_{\{\overline{V}(t^-)>0\}}\P(\overline{S}(t^-)<K)\}}\overline{\cal{N}}_{\lambda,2}(dt,dh).
\end{align*}
Note that for the same reason as previously, $\overline{\cal{N}}_{\lambda,2}$ is independent of $\overline{\cal{N}}_{\lambda,1}$.
By  formally taking the limit in equations~\eqref{state1}--\eqref{state4},
it leads to
\begin{align}\label{limit}
  \begin{cases}
    d \overline{R^r}(t)=&\int_{\R_+} \mathbf{1}_{\{0\leq h\leq \mathbf{1}_{\{\overline{S}(t^-)<K\}}\P(\overline{V}(t^-)>0)\}}\overline{\cal{N}}_{\lambda,1}(dt,dh)
    -\int_{\R_+} \mathbf{1}_{\{0\leq h\leq \overline{R^r}(t^-)\}}\overline{\cal{N}}_{\nu,1}(dt,dh),\\
    \\
    d\overline{R}(t)= &\int_{\R_+} \mathbf{1}_{\{0\leq h\leq \overline{R^r}(t^-)\}}\overline{\cal{N}}_{\nu,1}(dt,dh) -\sum_{l=1}^{+\infty} \mathbf{1}_{\{l\leq \overline{R}(t^-)\}}\cal{N}_{\mu,l}(dt),\\
    \\
    d \overline{V}(t)= &\sum_{l=1}^{+\infty} \mathbf{1}_{\{l\leq \overline{R}(t^-)\}}\cal{N}_{\mu,l}(dt)
    -\int_{\R_+} \mathbf{1}_{\{0\leq h\leq \mathbf{1}_{\{\overline{V}(t^-)>0\}}\P(\overline{S}(t^-)<K)\}}\overline{\cal{N}}_{\lambda,2}(dt,dh),\\
    \\
    d \overline{V^r}(t)=&\int_{\R_+} \mathbf{1}_{\{0\leq h\leq \mathbf{1}_{\{ \overline{V}(t^-)>0\}}\P(\overline{S}(t^-)<K)\}}\overline{\cal{N}}_{\lambda,2}(dt,dh)-\int_{\R_+} \mathbf{1}_{\{0\leq h\leq \overline{V^r}(t^-)\}}\overline{\cal{N}}_{\nu,2}(dt,dh).
   \end{cases}
\end{align}

\subsection{A first result} Then the first result gives the existence and uniqueness of a stochastic process solution of the system of  SDEs~\eqref{limit}. Let $T>0$ be fixed. Let $\cal{D}_T=\cal{D}([0,T],\cal{P}(\Sigma_K))$ be the set of cad-lag functions from $[0,T]$ to $\cal{P}(\Sigma_K)$.

\begin{theorem}[McKean--Vlasov process]\label{McKean-Vlasov}
   For every $(w,x,y,z)\in \Sigma_K$, the system of equations
      \begin{align}\label{uniquelimit}
  \begin{cases}
    \overline{R^r}(t)=&w+\iint_{[0,t]\times \R_+} \mathbf{1}_{\{0\leq h\leq \mathbf{1}_{\{\overline{S}(s^-)<K\}}\P(\overline{V}(s^-)>0)\}}\overline{\cal{N}}_{\lambda,1}(ds,dh)\\
    \\
    & -\iint_{[0,t]\times \R_+} \mathbf{1}_{\{0\leq h\leq \overline{R^r}(s^-)\}}\overline{\cal{N}}_{\nu,1}(ds,dh),\\
    \\
    \overline{R}(t)= & x+ \iint_{[0,t]\times \R_+} \mathbf{1}_{\{0\leq h\leq \overline{R^r}(s^-)\}}\overline{\cal{N}}_{\nu,1}(ds,dh)\\
    \\& -\int_0^t \sum_{l=1}^{+\infty} \mathbf{1}_{\{l\leq \overline{R}(s^-)\}}\cal{N}_{\mu,l}(ds),\\
    \\
    \overline{V}(t)= & y+\int_0^t \sum_{l=1}^{+\infty} \mathbf{1}_{\{l\leq \overline{R}(s^-)\}}\cal{N}_{\mu,l}(ds)\\
    \\
    & -\iint_{[0,t]\times \R_+} \mathbf{1}_{\{0\leq h\leq \mathbf{1}_{\{\overline{V}(s^-)>0\}}\P(\overline{S}(s^-)<K)\}}\overline{\cal{N}}_{\lambda,2 }(ds,dh)\\
   \\
    \overline{V^r}(t)=& z+ \iint_{[0,t]\times \R_+} \mathbf{1}_{\{0\leq h\leq \mathbf{1}_{\{\overline{V}(s^-)>0\}}\P(\overline{S}(s^-)<K)\}}\overline{\cal{N}}_{\lambda,2}(ds,dh)\\
    \\
    & -\iint_{[0,t]\times \R_+} \mathbf{1}_{\{0\leq h\leq \overline{V^r}(s^-)\}}\overline{\cal{N}}_{\nu,2}(ds,dh),
   \end{cases}
   \end{align}   
    has a unique solution $(\overline{R^r}(t),\overline{R}(t),\overline{V}(t),\overline{V^r}(t))$ in $\cal{D}_T$.
  \end{theorem}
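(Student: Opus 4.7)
The plan is to recast system~\eqref{uniquelimit} as a fixed-point problem for the pair of deterministic functions $p(t) = \P(\overline{V}(t) > 0)$ and $q(t) = \P(\overline{S}(t) < K)$, since these are the only law-dependent quantities entering the intensities. Given any cadlag $(p,q): [0,T] \to [0,1]^2$, the right-hand side of~\eqref{uniquelimit} becomes a standard pure-jump SDE on the finite set $\chi$ driven by the Poisson measures $\overline{\cal{N}}_{\lambda,i}$, $\overline{\cal{N}}_{\nu,i}$ and $(\cal{N}_{\mu,l})$, with deterministic, time-inhomogeneous but uniformly bounded jump rates (the total jump rate never exceeds some $C_0 = 2\lambda + (\mu+2\nu)K$). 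For each realisation of the drivers and each initial state $(w,x,y,z) \in \chi$, I construct a pathwise solution $\Gamma(p,q)$ by induction on the successive atoms of the drivers: between two atoms the state is constant, and at each atom the jump is read off the indicators in~\eqref{uniquelimit}; the constraints keep the process in $\chi$. Pathwise existence and uniqueness of $\Gamma(p,q)$ are then immediate. Define
\[
\Psi(p,q)(t) = \bigl(\,\P(\Gamma(p,q)_3(t) > 0)\,,\; \P(|\Gamma(p,q)(t)| < K)\,\bigr),
\]
where $|\cdot|$ denotes the sum of the four coordinates. A solution of~\eqref{uniquelimit} exists and is pathwise unique if and only if $\Psi$ admits a unique fixed point on the complete metric space $E_T$ of cadlag maps $[0,T] \to [0,1]^2$ equipped with the uniform norm.

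The central ingredient is a Lipschitz estimate for $\Psi$. I couple $\Gamma(p,q)$ and $\Gamma(\tilde p,\tilde q)$ on the same probability space, driven by the same Poisson measures and from the same initial state, and let $\tau(t) = \P(\Gamma(p,q)(t) \neq \Gamma(\tilde p,\tilde q)(t))$. The two paths can disagree at time $s$ only through two mechanisms: either (i) an atom of $\overline{\cal{N}}_{\lambda,1}$ or $\overline{\cal{N}}_{\lambda,2}$ falls in the symmetric difference $[0,p(s)] \triangle [0,\tilde p(s)]$ or $[0,q(s)] \triangle [0,\tilde q(s)]$ of the acceptance strips, which contributes at most $\lambda(|p(s) - \tilde p(s)| + |q(s) - \tilde q(s)|)$ to the disagreement rate; or (ii) the two paths already disagree at $s-$ and a common atom of any driver produces different transitions, which contributes at most $2 C_0 \tau(s)$ since jump rates are bounded by $C_0$. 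Taking expectations and applying Gronwall's inequality yields
\[
\tau(t) \le \lambda e^{2 C_0 T} \int_0^t \|(p,q)(s) - (\tilde p,\tilde q)(s)\|_\infty \, ds.
\]
Since $|\P(\Gamma(p,q)_3(t) > 0) - \P(\Gamma(\tilde p,\tilde q)_3(t) > 0)| \le \tau(t)$, and similarly for the $(|{\cdot}| < K)$-probability, we obtain $\|\Psi(p,q)(t) - \Psi(\tilde p,\tilde q)(t)\|_\infty \le C \int_0^t \|(p,q)(s) - (\tilde p,\tilde q)(s)\|_\infty \, ds$ for $C = \lambda e^{2 C_0 T}$.

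Iterating this inequality produces $\|\Psi^n(p,q) - \Psi^n(\tilde p,\tilde q)\|_\infty \le (C T)^n/n!\,\|(p,q) - (\tilde p,\tilde q)\|_\infty$, so some iterate $\Psi^n$ is a strict contraction on $E_T$. Banach's theorem then gives a unique fixed point $(p^*, q^*)$, and the process $\Gamma(p^*, q^*)$ is the unique cadlag solution of~\eqref{uniquelimit}. The main delicacy I anticipate is the clean separation of mechanisms (i) and (ii) in the coupling: this relies crucially on driving the rate-$\lambda$ jumps via Poisson random measures on $\R_+^2$ (which is precisely what~\eqref{uniquelimit} does) rather than via independent one-dimensional Poisson processes, so that a single realisation of $\overline{\cal{N}}_{\lambda,i}$ can be thinned simultaneously with two different acceptance thresholds, creating discrepancies only on the symmetric-difference strip. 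Beyond that, the argument is routine and mirrors the approach to nonlinear Markov processes with bounded jump rates described in Robert~\cite{robert2013stochastic}.
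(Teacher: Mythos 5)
Your argument is correct, and it reaches the conclusion by a genuinely leaner route than the paper's. The paper freezes the nonlinearity at the level of the full path law: it defines a map $\Phi$ on $(\cal{P}(\cal{D}_T),W_T)$ sending $\pi$ to the law of the solution of the SDE with $\pi(v(s)>0)$ and $\pi(\|z(s)\|<K)$ substituted for the self-referential probabilities, couples $Z_{\pi_1}$ and $Z_{\pi_2}$ through shared Poisson drivers, bounds $\E\left(\|Z_{\pi_1}-Z_{\pi_2}\|_{\infty,t}\right)$ term by term via the symmetric difference of acceptance strips, and closes with Gr\"onwall plus a Picard iteration yielding the $(C_TT)^n/n!$ bound. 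You instead exploit the fact that the law enters the dynamics only through the two scalar functionals $p(t)=\P(\overline{V}(t)>0)$ and $q(t)=\P(\overline{S}(t)<K)$, and set up the fixed point directly on cadlag maps $[0,T]\to[0,1]^2$ under the sup norm. The core mechanism is the same --- identical coupling by shared Poisson measures, identical symmetric-difference estimate (valid because the state-dependent indicator $\mathbf{1}_{\{S<K\}}$ agrees for both copies until the first discrepancy, and the $\nu$- and $\mu$-driven jumps, whose intensities do not involve $(p,q)$, cannot create one), identical Gr\"onwall/iteration step --- but your contraction is measured by $\P(\Gamma(p,q)(t)\neq\Gamma(\tilde p,\tilde q)(t))$ rather than by an expected sup-norm distance, and your ambient space is an elementary complete normed space rather than $\cal{P}(\cal{D}_T)$ with a Wasserstein metric, so you bypass the completeness and separability discussion for that space. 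As a bonus you obtain pathwise uniqueness (any solution induces a fixed point of $\Psi$, hence coincides almost surely with $\Gamma(p^*,q^*)$), formally stronger than the uniqueness in law the paper states. Two minor remarks: your mechanism (ii) and the factor $e^{2C_0T}$ are superfluous, since a disagreement at time $t$ requires a first discrepancy to have been created while the copies still agreed, giving $\tau(t)\le\lambda\int_0^t(|p-\tilde p|+|q-\tilde q|)(s)\,ds$ directly; and the paper's path-law formulation is the one that would survive if the dynamics depended on the law through infinitely many functionals, whereas yours is the sharper tool for this particular system.
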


  Note that the solution of equation~\eqref{uniquelimit} satisfies the Fokker--Planck equation~\eqref{FPequation} in the introduction.
  \begin{proof}
     The proof is standard and quite technical. Let us introduce the Wasserstein distances on $\cal{P}(\cal{D}_T)$. For $\pi_1$, $\pi_2\in \cal{P}(\cal{D}_T)$,
    \begin{align*}
      W_T(\pi_1, \pi_2)&=\inf_{\pi \in \cal{C}_T(\pi_1, \pi_2)} \int_{\omega=(\omega_1,\omega_2)\in \cal{D}_T^2} \left(d_T(\omega_1,\omega_2)\wedge 1\right) d\pi(\omega),\\
     \rho _T(\pi_1, \pi_2)&=\inf_{\pi \in \cal{C}_T(\pi_1, \pi_2)} \int_{\omega=(\omega_1,\omega_2)\in \cal{D}_T^2} \left(\| \omega_1-\omega_2\|_{\infty,T})\wedge 1\right) d\pi(\omega)
    \end{align*}
    where, for $f\in \cal{D}_T$, $\|f\|_{\infty,T}=\sup\{\|f\|,0\leq t\leq T\}=\sup\{\sum_{i=1}^4 |f_i(t)|,0\leq t\leq T\}$, $\cal{C}_T(\pi_1, \pi_2)$ is the set of couplings of $\pi_1$ and  $\pi_2$, i.e. the subset of $\cal{P}(\cal{D}_T^2)$ with first marginal is $\pi_1$ and the second is $\pi_2$, $(\cal{D}_T,d_T)$ with $d_T$ the distance associated to the Skorohod topology is complete and separable thus $(\cal{P}(\cal{D}_T),W_T)$ is complete and separable and $W_T(\pi_1, \pi_2)\leq \rho _T(\pi_1, \pi_2)$.
    
    Let us define
    \begin{align*}
      \Phi:  (\cal{P}(\cal{D}_T),W_T) & \rightarrow (\cal{P}(\cal{D}_T),W_T)\\
      \pi & \mapsto \Phi(\pi)
    \end{align*}
    where $\Phi(\pi)$ is the distribution of $(Z_{\pi}(t))=(R^r_{\pi}(t),R_{\pi}(t),V_{\pi}(t),V^r_{\pi}(t))$, unique solution of the SDEs
    \vspace{-1cm}
    
        \begin{align}\label{Z_pi}
  \begin{cases}
    R^r_{\pi}(t)=&w+\iint_{[0,t]\times \R_+} \mathbf{1}_{\{0\leq h\leq \mathbf{1}_{\{\|Z_{\pi}(s^-)\|<K\}}\pi(v(s^-)>0)\}}\overline{\cal{N}}_{\lambda,1}(ds,dh)\\
    \\
    & -\iint_{[0,t]\times \R_+} \mathbf{1}_{\{0\leq h\leq R^r_{\pi}(s^-)\}}\overline{\cal{N}}_{\nu,1}(ds,dh),\\
    \\
    R_{\pi}(t)= & x+ \iint_{[0,t]\times \R_+} \mathbf{1}_{\{0\leq h\leq R^r_{\pi}(s^-)\}}\overline{\cal{N}}_{\nu,1}(ds,dh)\\
    \\& -\int_0^t \sum_{l=1}^{+\infty} \mathbf{1}_{\{l\leq R_{\pi}(s^-)\}}\cal{N}_{\mu,l}(ds),\\
    \\
    V_{\pi}(t)= & y+\int_0^t \sum_{l=1}^{+\infty} \mathbf{1}_{\{l\leq R_{\pi}(s^-)\}}\cal{N}_{\mu,l}(ds)\\
    \\
    & -\iint_{[0,t]\times \R_+} \mathbf{1}_{\{0\leq h\leq \mathbf{1}_{\{V_{\pi}(s^-)>0\}}\pi(\|z(s^-)\|<K)\}}\overline{\cal{N}}_{\lambda,2 }(ds,dh)\\
   \\
    V^r_{\pi}(t)=& z+ \iint_{[0,t]\times \R_+} \mathbf{1}_{\{0\leq h\leq \mathbf{1}_{\{V_{\pi}(s^-)>0\}}\pi(\|z(s^-)\|<K)\}}\overline{\cal{N}}_{\lambda,2}(ds,dh)\\
    \\
    & -\iint_{[0,t]\times \R_+} \mathbf{1}_{\{0\leq h\leq V^r_{\pi}(s^-)\}}\overline{\cal{N}}_{\nu,2}(ds,dh).
   \end{cases}
        \end{align}
        Note that $\pi(v(t)>0)=\int_{z=(r^r,r,v,v^r)\in \cal{D}_T} 1_{\{v(t)>0\}} d\pi(z)$. The existence and uniqueness of a solution to~\eqref{uniquelimit} is equivalent to the existence and uniqueness of a fixed point $\pi=\Phi(\pi)$.

        Let us prove that $\pi=\Phi(\pi)$ has a unique fixed point.
For $\pi_1$, $\pi_2\in \cal{P}(\cal{D}_T)$, let $Z_{\pi_1}$ and $Z_{\pi_2}$ be  solutions of~\eqref{Z_pi}. 
        Thus  $(Z_{\pi_1},Z_{\pi_2})$ is a coupling of $\Phi(\pi_1)$ and $\Phi(\pi_2)$ and, for $ t\leq T$,
        $ \rho _t(\Phi(\pi_1), \Phi(\pi_2))\leq \E\left( \|Z_{\pi_1}-Z_{\pi_2}\|_{\infty,t}\right)$.
       
        For $t\leq T$, using the definition~\eqref{Z_pi} of $Z_{\pi_1}$ and $Z_{\pi_2}$, 
       \begin{align}\label{diffZ_pi}
        & \| Z_{\pi_1} -Z_{\pi_2} \|_{\infty,t}\nonumber\\
         & =\sup_{0\leq s\leq t} \left( |R^r_{\pi_1}(s)-R^r_{\pi_2}(s)|+|R_{\pi_1}(s)-R_{\pi_2}(s)|+|V_{\pi_1}(s)-V_{\pi_2}(s)|+|V^r_{\pi_1}(s)-V^r_{\pi_2}(s)| \right)\nonumber \\
         & \qquad  \leq \iint_{[0,t]\times \R_+} \mathbf{1}_{\{A_{\pi_1}(s^-) \wedge  A_{\pi_2}(s^-) \leq  h \leq   A_{\pi_1}(s^-) \vee  A_{\pi_2}(s^-)\}}\overline{\cal{N}}_{\lambda,1}(ds,dh)\nonumber\\
         & \qquad + 2 \int_0^t \sum_{l=1}^{+\infty} |\mathbf{1}_{\{l\leq R_{\pi_1}(s^-)\}}-\mathbf{1}_{\{l\leq R_{\pi_2}(s^-)\}}| \cal{N}_{\mu,l}(ds)\nonumber\\
         & \qquad +2  \iint_{[0,t]\times \R_+} \mathbf{1}_{\{R^r_{\pi_1}(s^-) \wedge R^r_{\pi_2}(s^-)\leq h\leq R^r_{\pi_1}(s^-)\vee R^r_{\pi_2}(s^-)\}} \overline{\cal{N}}_{\nu,1}(ds,dh)\nonumber\\
         & \qquad +2 \iint_{[0,t]\times \R_+} \mathbf{1}_{\{ B_{\pi_1}(s^-) \wedge B_{\pi_2}(s^-) \leq h\leq B_{\pi_1}(s^-) \vee B_{\pi_2}(s^-)\}} \overline{\cal{N}}_{\lambda,2}(ds,dh)\nonumber\\
         & \qquad \qquad + \iint_{[0,t]\times \R_+} \mathbf{1}_{\{V^r_{\pi_1}(s^-)\wedge V^r_{\pi_2}(s^-) \leq h\leq V^r_{\pi_1}(s^-)\vee V^r_{\pi_2}(s^-)\}}\overline{\cal{N}}_{\nu,2}(ds,dh).
         \end{align}
  where
\begin{align*}
  A_{\pi}(t)= \mathbf{1}_{\{\|Z_{\pi}(t)\|<K\}}\pi(v(t)>0) \text{ and }\qquad
   B_{\pi}(t)= \mathbf{1}_{\{V_{\pi}(t)>0\}}\pi(\|z(t)\|<K).
       \end{align*}
The mean of each term of the right-hand side of the previous equation is bounded as follows. For the first term,
\begin{align*}
  & \E\left( \iint_{[0,t]\times \R_+} \mathbf{1}_{\{A_{\pi_1}(s^-) \wedge  A_{\pi_2}(s^-) \leq  h \leq   A_{\pi_1}(s^-) \vee  A_{\pi_2}(s^-)\}} \overline{\cal{N}}_{\lambda,1}(ds,dh)\right)\\
  & \leq \lambda  \int_0^t |\pi_1(v(s)>0)-\pi_2(v(s)>0)| ds\\
  & = \lambda  \int_0^t |\pi(z,v_1(s)>0)-\pi(z,v_2(s)>0)| ds\\
  & = \lambda  \int_0^t\int_{\omega=(z_1,z_2)\in \cal{D}_T^2} |\mathbf{1}_{\{v_1(s)>0)\}}-\mathbf{1}_{\{v_2(s)>0)\}}|\pi(d\omega) ds\\
    & \leq \lambda  \int_0^t \int_{\omega=(z_1,z_2)\in \cal{D}_T^2} |v_1(s)-v_2(s)|\wedge 1 \quad\pi(d\omega) ds\\
      & \leq \lambda  \int_0^t \rho_s(\pi_1,\pi_2) ds.
  \end{align*}
For the second term of the right-hand side of equation~\eqref{diffZ_pi},
\begin{align*}
 & \E\left( \int_0^t \sum_{l=1}^{+\infty} |\mathbf{1}_{\{l\leq R_{\pi_1}(s^-)\}}-\mathbf{1}_{\{l\leq R_{\pi_2}(s^-)\}}| \cal{N}_{\mu,l}(ds)\right)\\
  &\leq \mu \int_0^t   \E\left(\sum_{l=1}^{+\infty} |\mathbf{1}_{\{l\leq R_{\pi_1}(s)\}}-\mathbf{1}_{\{l\leq R_{\pi_2}(s)\}}|\right) ds\\
 & \leq \mu \int_0^t   \E\left(| R_{\pi_1}(s)-R_{\pi_2}(s)|\right) ds\\
 & \leq \mu \int_0^t   \E\left( \|Z_{\pi_1}-Z_{\pi_2}\|_{\infty,s}\right) ds.
\end{align*}
For the third term of the right-hand side of~\eqref{diffZ_pi},
\begin{align*}
  & \E\left(  \iint_{[0,t]\times \R_+} \mathbf{1}_{\{R^r_{\pi_1}(s^-) \wedge R^r_{\pi_2}(s^-)\leq h\leq R^r_{\pi_1}(s^-)\vee R^r_{\pi_2}(s^-)\}} \overline{\cal{N}}_{\nu,1}(ds,dh)\right) \\
  &  \leq \nu \int_0^t \E(|R^r_{\pi_1}(s)-R^r_{\pi_2}(s)|) ds\\
  &  \leq \nu \int_0^t \E(\|Z_{\pi_1}-Z_{\pi_2}\|_{\infty,s}) ds.
\end{align*}
For the fourth term,
\begin{align*}
&  \iint_{[0,t]\times \R_+} \mathbf{1}_{\{ B_{\pi_1}(s^-) \wedge B_{\pi_2}(s^-) \leq h\leq B_{\pi_1}(s^-) \vee B_{\pi_2}(s^-)\}} \overline{\cal{N}}_{\lambda,2}(ds,dh)\\
&  \leq \lambda \int_0^t \E\left(  |\mathbf{1}_{\{V_{\pi_1}(s)>0\}}\pi_1(\|z(s)\|<K)- \mathbf{1}_{\{V_{\pi_2}(s)>0\}}\pi_2(\|z(s)\|<K)|\right) ds\\
&  \leq \lambda \int_0^t \E\left(  |\mathbf{1}_{\{V_{\pi_1}(s)>0\}}- \mathbf{1}_{\{V_{\pi_2}(s)>0\}}|\right) ds\\
&  \leq \lambda \int_0^t \E\left(  |V_{\pi_1}(s)-V_{\pi_2}(s)|\right) ds\\
&  \leq \lambda \int_0^t \E\left(  \|Z_{\pi_1}-V_{\pi_2}\|_{\infty,s}\right) ds.
\end{align*}
As for the third term, for the fifth term,
\begin{align*}
  & \E\left(  \iint_{[0,t]\times \R_+} \mathbf{1}_{\{V^r_{\pi_1}(s^-) \wedge V^r_{\pi_2}(s^-)\leq h\leq V^r_{\pi_1}(s^-)\vee V^r_{\pi_2}(s^-)\}} \overline{\cal{N}}_{\nu,2}(ds,dh)\right) \\
  &  \leq \nu \int_0^t \E(\|Z_{\pi_1}-Z_{\pi_2}\|_{\infty,s}) ds.
\end{align*}
Thus
\begin{align*}
 &\E(\| Z_{\pi_1}-Z_{\pi_2} \|_{\infty,t})\leq (2\mu+3\nu+2\lambda)\int_0^t  \E(\| Z_{\pi_1}-Z_{\pi_2} \|_{\infty,s})ds+\lambda \int_0^t \rho_s(\pi_1,\pi_2)ds.
\end{align*}
By Gr\"{o}nwall's inequality,
\begin{align*}
  \E(\| Z_{\pi_1}-Z_{\pi_2} \|_{\infty,t})\leq C_t \int_0^t \rho_s(\pi_1,\pi_2)ds
\end{align*}
with $C_t=\lambda \exp((2\mu+3\nu+2\lambda)t)$. For $t\leq T$,
\begin{align}\label{Gronwall}
\rho_t(\Phi(\pi_1),\Phi(\pi_2))  \leq C_T \int_0^t \rho_s(\pi_1,\pi_2)ds.
\end{align}
It leads to the uniqueness of the solution of $\Phi(\pi)=\pi$. Indeed, if $\pi_1$ and $\pi_2$ are fixed points of $\Phi$, then equation~\eqref{Gronwall} is rewritten
\begin{align*}
 \rho_t( \pi_1,\pi_2)\leq C_T \int_0^t \rho_s(\pi_1,\pi_2)ds.
   \end{align*}
By Gr\"{o}nwall's inequality again, for each $t\leq T$, $\rho_t( \pi_1,\pi_2)=0$
thus $\pi_1=\pi_2$. The existence is proved by an iteration argument. Let $\pi_0\in \cal{P}(\cal{D}_T)$ and $\pi_{n+1}=\Phi(\pi_n)$. By equation~\eqref{Gronwall},
\begin{multline*}
  W_T(\pi_{n+1},\pi_n)\leq \rho_T(\pi_{n+1},\pi_n)\\
  \leq C_T^n \rho_T(\pi_{1},\pi_0)\int_{0\leq s_1 \leq s_2 \ldots s_n \leq T}ds_1\ldots ds_n \leq \frac{(C_TT)^n}{n!} \rho_T(\pi_{1},\pi_0).
\end{multline*}
Thus $(\pi_n)$ converges since $(\cal{P}(\cal{D}_T),W_T)$ is complete. Because of equation~\eqref{Gronwall}, $\Phi$ is continuous for the Skorohod topology and its limit is a fixed point of $\Phi$.
  \end{proof}

  \section{Mean-field limit}\label{SectionMean-FieldLimit} 
  Recall that,  by Theorem~\ref{McKean-Vlasov}, the so-called McKean--Vlasov process $(\overline{Z}(t))$ is the unique solution of  the system of equations~\eqref{uniquelimit}. The empirical distribution $\Lambda^N (t)$ of $(Z^N_i(t), 1\leq i \leq N)$ is defined,
  for  $f$ on $\Sigma_K$, by
  \begin{align*}
 \Lambda^N(t)(f)=\frac{1}{N} \sum_{i=1}^N f( Z^N_i(t))= \frac{1}{N} \sum_{i=1}^N f (R^{r,N}_i(t),R^{N}_i(t),V^{N}_i(t),V^{r,N}_i(t)) 
  \end{align*}
  Process $(\Lambda^N(t))$ takes values in
  \begin{align}\label{YN}
    \cal{Y}^N=\{\Lambda \in \cal{P}(\Sigma_K), \Lambda_{(w,x,y,z)}\in \frac{\N}{N}, (w,x,y,z)\in \Sigma_K,\!\!\!\!\! \sum_{(w,x,y,z)\in \Sigma_K}(x+y+z)\Lambda_{(w,x,y,z)}N=M_N\}.
  \end{align}
      \noindent
  As process $Z^N$ is not Markov, process $(\Lambda^N(t))$ is not Markov. The aim of this section is to prove the mean-field convergence for $(Z^N(t))$, i.e. that the sequence of processes $(\Lambda^N(t))$ converges in distribution to $(\overline{Z}(t))$. It means that, for any function $f$ with finite support, the sequence of processes $(\Lambda^N(t)(f))$ converges in distribution to $(\E(f(\overline{Z}(t)))$.

  \begin{theorem}[Mean-field convergence]\label{mean-field}
     The sequence of empirical distribution process $(\Lambda^N(t))$ converges in distribution to a process $(\Lambda(t))\in \cal{D}([0,T],\cal{P}(\Sigma_K))$ defined by, for $f$ with finite support on $\Sigma_K$,
    \begin{align*}
     \Lambda(t)(f)=\E(f(\overline{Z}(t))) 
    \end{align*}
    with $(\overline{Z}(t))$ the unique solution of equation~\eqref{uniquelimit}. Moreover, for any $k\geq 1$ and for $1\leq i_1< \ldots <i_k\leq N$, the sequence of finite marginals $(Z^N_{i_1}(t),\ldots,Z^N_{i_k}(t))$ converges in distribution to $(\overline{Z}_{i_1}(t),\ldots,\overline{Z}_{i_k}(t))$, where $(\overline{Z}_{i_1}(t))$,$\ldots$,  $(\overline{Z}_{i_k}(t))$ are independent random variables with the same distribution as $(\overline{Z}(t))$.
\end{theorem}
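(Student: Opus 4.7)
The plan is to prove Theorem~\ref{mean-field} by a coupling argument of propagation-of-chaos type, in the same spirit as the proof of Theorem~\ref{McKean-Vlasov}. On a common probability space I would construct $N$ independent copies $(\overline{Z}_i(t))_{1\le i\le N}$ of the McKean--Vlasov process, each driven by its own Poisson processes $\overline{\cal N}_{\lambda,k,i}$, $\overline{\cal N}_{\nu,k,i}$ ($k=1,2$) and $\cal N_{\mu,i,l}$. I would then couple these with the Poisson processes driving $(Z^N(t))$: the pick-up processes $\cal N_{\nu,j,i,l}$ and $\cal N_{\nu,i,j,l}$ provide, after the summations performed in~\eqref{defP} and~\eqref{defPtilde}, point processes with compensators matching $\nu \overline{R^r}_i(s)\,ds$ and $\nu \overline{V^r}_i(s)\,ds$, which I would identify (via a thinning/marking construction) with $\overline{\cal N}_{\nu,1,i}$ and $\overline{\cal N}_{\nu,2,i}$; the return processes $\cal N_{\mu,i,l}$ are shared as is; finally the uniform reservation processes $\cal N^{U,N}_{\lambda,i}$ are coupled with $\overline{\cal N}_{\lambda,2,i}$ by thinning and with $\overline{\cal N}_{\lambda,1,j}$ by keeping only the atoms whose uniform mark equals $j$.

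With this coupling, the five pairs of terms in~\eqref{state1}--\eqref{state4} and~\eqref{uniquelimit} combine, coordinate by coordinate, into thin-strip indicator integrals of exactly the type written in~\eqref{diffZ_pi}. The only new ingredient is the gap between the empirical fraction appearing in the prelimit compensator and its limiting probability: for example the reservation intensity at station $i$ in $Z^N$ is $\lambda\mathbf{1}_{\{S^N_i(s)<K\}}\tfrac{1}{N}\sum_{j=1}^N \mathbf{1}_{\{V^N_j(s)>0\}}$, whereas the corresponding limit is $\lambda\mathbf{1}_{\{\overline{S}_i(s)<K\}}\P(\overline{V}(s)>0)$. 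I would split this gap by the triangle inequality as
\begin{align*}
\Bigl|\tfrac{1}{N}\sum_{j=1}^N \mathbf{1}_{\{V^N_j(s)>0\}} - \P(\overline{V}(s)>0)\Bigr|
&\le \tfrac{1}{N}\sum_{j=1}^N \bigl|\mathbf{1}_{\{V^N_j(s)>0\}} - \mathbf{1}_{\{\overline{V}_j(s)>0\}}\bigr| \\
&\qquad + \Bigl|\tfrac{1}{N}\sum_{j=1}^N \mathbf{1}_{\{\overline{V}_j(s)>0\}} - \P(\overline{V}(s)>0)\Bigr|,
\end{align*}
in which the last term is a classical law-of-large-numbers fluctuation, bounded in $L^1$ by $1/(2\sqrt N)$ since the $\overline{V}_j$ are i.i.d.\ Bernoullis, and the first term is bounded by $\tfrac{1}{N}\sum_j \|Z^N_j-\overline{Z}_j\|_{\infty,s}$. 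The same splitting handles the destination indicator $\mathbf{1}_{\{S^N_j(s)<K\}}$. Taking expectations, using exchangeability to collapse $\E\|Z^N_j-\overline{Z}_j\|_{\infty,s}$ to a single quantity independent of $j$, and applying Gr\"{o}nwall's inequality as in Theorem~\ref{McKean-Vlasov}, I would obtain
\begin{align*}
\E\bigl(\|Z^N_1-\overline{Z}_1\|_{\infty,T}\bigr) \le \frac{C_T}{\sqrt N}
\end{align*}
for a constant depending only on $T,\lambda,\mu,\nu$, provided the initial data $Z^N_i(0)$ are coupled in an exchangeable way consistent with the empirical initial distribution.

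From this uniform coupling bound both conclusions of the theorem follow. For any bounded $f$ on $\chi$, $\Lambda^N(t)(f)-\tfrac{1}{N}\sum_i f(\overline{Z}_i(t))$ has $L^1$-norm at most $\|f\|_\infty C_T/\sqrt N$, and the second quantity converges in probability to $\E f(\overline{Z}(t))=\Lambda(t)(f)$ by the law of large numbers on i.i.d.\ copies; since all coordinates of $Z^N_i$ are bounded by $K$ and the total jump rate at each station is uniformly bounded by $\lambda+(\mu+\nu)K$, Aldous' criterion yields tightness of $(\Lambda^N)$ in $\cal D([0,T],\cal P(\chi))$ and identifies the limit. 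The convergence of finite-dimensional marginals $(Z^N_{i_1},\ldots,Z^N_{i_k})$ to independent copies follows immediately from the same coupling bound, since the $\overline{Z}_{i_1},\ldots,\overline{Z}_{i_k}$ are independent by construction.

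The main technical obstacle is the construction of the coupling itself: the uniformly marked Poisson processes $\cal N^{U,N}_{\lambda,i}$ simultaneously affect a ``departing'' station $i$ and an ``arriving'' station $j$, whereas in the McKean--Vlasov equations two independent families $\overline{\cal N}_{\lambda,1}$ and $\overline{\cal N}_{\lambda,2}$ appear (as noted in Remark~\ref{rem}); reconciling these two descriptions while keeping the correct compensators on both sides is the delicate point. Once this is settled, all other steps are routine adaptations of the computations carried out in the proof of Theorem~\ref{McKean-Vlasov}, combined with the elementary LLN estimate on i.i.d.\ Bernoullis.
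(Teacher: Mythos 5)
Your route is genuinely different from the paper's. The paper never couples trajectories: it writes a closed evolution equation for the empirical measure $\Lambda^N(t)(f)$ (using the identities $\sum_j X^N_{j,i}=R^{r,N}_i$, $\sum_j X^N_{i,j}=V^{r,N}_i$ so that the compensators depend on $X^N$ only through $Z^N$ and $\Lambda^N$), shows the martingale term has quadratic variation of order $1/N$, proves tightness via the modulus-of-continuity criterion, shows every limit point solves the nonlinear ODE~\eqref{ODE}, and proves uniqueness of solutions of that ODE by a total-variation Gr\"onwall estimate; propagation of chaos is then quoted from Sznitman. Your Sznitman-type coupling would, if completed, buy more than the paper proves --- an explicit $O(1/\sqrt N)$ rate for $\E\|Z^N_1-\overline Z_1\|_{\infty,T}$ --- and your Gr\"onwall skeleton (splitting the empirical fraction into a coupling error plus an i.i.d.\ fluctuation) is the right one for that approach.

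However, as written the proof has a genuine gap, and it sits exactly where you place the phrase ``the delicate point.'' First, the coupling is not constructed. The families $(\cal{N}_{\nu,j,i,l})_{j,l}$ accept or reject atoms according to the individual coordinates $X^N_{j,i}$, not according to their sum $R^{r,N}_i$; turning them into a planar Poisson process $\overline{\cal{N}}_{\nu,1,i}$ for which the prelimit dynamics is a thinning over the strip $\{h\le R^{r,N}_i(s^-)\}$ requires a time-varying stacking/relabelling construction (or a redefinition of the prelimit SDEs in planar form), and the overlap between the families indexed $(i,j)$ and $(j,i)$ at $j=i$ must be controlled (it contributes through $X^N_{i,i}$, whose expectation is $O(K/N)$, but this needs to be said and used). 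Second, and more seriously, your $1/(2\sqrt N)$ bound for $|\frac1N\sum_j \mathbf{1}_{\{\overline V_j(s)>0\}}-\P(\overline V(s)>0)|$ presumes the $N$ limit copies are i.i.d. If you build their driving noises out of the prelimit noises, they are not: every atom of the departure process $\cal{N}^{U,N}_{\lambda,i}(\cdot,\N)$ at station $i$ is simultaneously an atom of the arrival process $\sum_k\cal{N}^{U,N}_{\lambda,k}(\cdot,\{j\})$ at the (random) destination $j$, so the copies share atoms globally even though each fixed pair overlaps only at rate $\lambda/N$. You must either prove a pairwise covariance bound $O(1/N)$ for the indicators $\mathbf{1}_{\{\overline V_j>0\}}$ (e.g.\ by a secondary decoupling argument) to recover $\mathrm{Var}=O(1/N)$ for the average, or drive the limit copies by fresh independent noises and absorb the resulting noise mismatch into the coupling defect. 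Neither is done, and for this model --- where one reservation atom makes simultaneous jumps at two distinct stations --- this is the substantive content of a coupling proof, not a routine adaptation of the computations in Theorem~\ref{McKean-Vlasov}. The paper's compactness-plus-uniqueness route avoids the issue entirely because the shared atoms only enter through the martingale bracket, where the cross terms are explicitly of order $1/N$.
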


  The last property is the  propagation of chaos. The proof is presented in Section~\ref{convergenceempirical}. 

  \subsection{Evolution equations of the empirical measure}
  Let us introduce the following notations. For $z\in \Sigma_K$, $f: \Sigma_K \rightarrow \R_+$ and $(e_i,1\leq i\leq 4)$ the canonical basis of $\R^4$ i.e. $e_1=(1,0,0,0),\ldots,e_4=(0,0,0,1) $,
  \begin{align*}
    \Delta_{i,i+1}(f)(z) &= f(z-e_i+e_{i+1})-f(z),\quad 1\leq i\leq 3,\\
    \Delta_{1}^+(f)(z) &= f(z+e_1)-f(z),\\
    \Delta_{4}^-(f)(z) &= f(z-e_4)-f(z). 
  \end{align*}
 Let $ \Sigma_K=\{(w,x,y,z)\in \N^4, w+x+y+z\leq K\}$. For $f: \Sigma_K\rightarrow \R_+$, straightforwardly by equations~\eqref{state1}-\eqref{state4},
  \begin{align*}
    df(Z_i^N(t))=&\Delta_{1}^+(f)(Z_i^N(t))\sum_{j=1}^N \mathbf{1}_{\{V^N_{j}(t^-)>0,\; S^N_{i}(t^-)<K\}}\; \cal{N}_{\lambda,j}^{U,N}(dt,\{i\})\\
    &+\Delta_{1,2}(f)(Z_i^N(t)) \sum_{j=1}^N\left( \sum_{l=1}^{\infty}\mathbf{1}_{\{l\leq X^N_{j,i}(t^-)\}}\cal{N}_{\nu,j,i,l}(dt)\right)\\
    &+\Delta_{2,3}(f)(Z_i^N(t))\sum_{l=1}^{\infty}\mathbf{1}_{\{l\leq R^N_i(t^-)\}}\cal{N}_{\mu,i,l}(dt)\\
    &+\Delta_{3,4}(f)(Z_i^N(t))\sum_{j=1}^N \mathbf{1}_{\{V^N_i(t^-)>0,\; S^N_j(t^-)<K\}}\; \cal{N}_{\lambda,i}^{U,N}(dt,\{j\})\\
    &+\Delta_{4}^-(f)(Z_i^N(t))\sum_{j=1}^N \left( \sum_{l=1}^{\infty}\mathbf{1}_{\{l\leq X^N_{i,j}(t^-)\}}\cal{N}_{\nu,i,j,l}(dt)\right)
  \end{align*}
  Thus, using the martingale decomposition for Poisson processes,
  \begin{align}\label{f_evolution}
   df(Z_i^N(t))=&\Delta_{1}^+(f)(Z_i^N(t))\mathbf{1}_{\{ S^N_{i}(t)<K\}}\sum_{j=1}^N \mathbf{1}_{\{V^N_{j}(t)>0\}}\frac{\lambda}{N} dt\\
    &+\Delta_{1,2}(f)(Z_i^N(t)) R^{r,N}_{i}(t) \nu dt 
    +\Delta_{2,3}(f)(Z_i^N(t)) R^N_i(t) \mu dt \nonumber\\
    &+\Delta_{3,4}(f)(Z_i^N(t)) \mathbf{1}_{\{V^N_i(t)>0\}} \sum_{j=1}^N \mathbf{1}_{\{ S^N_j(t)<K\}} \; \frac{\lambda}{N} dt\nonumber\\
    &+\Delta_{4}^-(f)(Z_i^N(t)) V^{r,N}_{i}(t) \nu dt +d \cal{M}_{f,i}^N (t) \nonumber
  \end{align}
  where $(\cal{M}_{f,i}^N (t))$ is a martingale which will be detailed in Section~\ref{Smartingale}.
  By summing equation~\eqref{f_evolution} for i from $1$ to $N$ and dividing by $N$,
  \begin{align}\label{empirical_evolution}
    \Lambda^N(t)(f)& = \Lambda^N(0)(f)+\cal{M}_f^N(t)\nonumber\\
    &+\lambda \int_0^t \Lambda^N(s)(\Sigma_K\cap \{y>0\})\Lambda^N(s)(\Delta_{1}^+(f)\mathbf{1}_{\Sigma_{<K}})ds\nonumber \\
    &+  \nu \int_0^t \Lambda^N(s)(\Delta_{1,2}(f)p_1)ds+\mu \int_0^t \Lambda^N(s)(\Delta_{2,3}(f)p_2)ds\\
    &+  \lambda \int_0^t \Lambda^N(s)(\Sigma_{<K})\Lambda^N(s)(\Delta_{3,4}(f)\mathbf{1}_{\{y>0\}})ds+\nu \int_0^t \Lambda^N(s)(\Delta_{4}^-(f)p_4)ds \nonumber
  \end{align}
  where
  \begin{align}
    \cal{M}_f^N(t)&=\frac{1}{N} (\cal{M}_{f,1}^N (t)+\cal{M}_{f,2}^N (t)\ldots+ \cal{M}_{f,N}^N (t)), \label{marte}\\
    \Sigma_{<K}&= \{(w,x,y,z)\in \N^4, w+x+y+z<K\}\label{def<K}
  \end{align}
  and
  \begin{align}
     p_i&:\N^4\rightarrow \N \text{ is the } i-th \text{ projection } (\text{for example } p_1(w,x,y,z)=w).\label{p_i}
  \end{align}

  \subsection{The martingale term}\label{Smartingale}
  The martingale term is  $\cal{M}_f^N(t)$ given by equation~\eqref{marte}  where, for $1\leq i\leq N$,
  \begin{align*}
 d\cal{M}_{f,i}^N (t) =  &\Delta_{1}^+(f)(Z_i^N(t))\sum_{j=1}^N \mathbf{1}_{\{V^N_{j}(t^-)>0,\; S^N_{i}(t^-)<K\}}\; ( \cal{N}_{\lambda,j}^{U,N}(dt,\{i\})-\frac{\lambda}{N}dt )\\
    &+\Delta_{1,2}(f)(Z_i^N(t)) \sum_{j=1}^N \sum_{l=1}^{\infty}\mathbf{1}_{\{l\leq X^N_{j,i}(t^-)\}} ( \cal{N}_{\nu,j,i,l}(dt)-\nu dt )\\
    &+\Delta_{2,3}(f)(Z_i^N(t))\sum_{l=1}^{\infty}\mathbf{1}_{\{l\leq R^N_i(t^-)\}} ( \cal{N}_{\mu,i,l}(dt)-\mu dt )\\
    &+\Delta_{3,4}(f)(Z_i^N(t))\sum_{j=1}^N \mathbf{1}_{\{V^N_i(t^-)>0,\; S^N_j(t^-)<K\}}\; ( \cal{N}_{\lambda,i}^{U,N}(dt,\{j\})-\frac{\lambda}{N} dt )\\
    &+\Delta_{4}^-(f)(Z_i^N(t))\sum_{j=1}^N  \sum_{l=1}^{\infty}\mathbf{1}_{\{l\leq X^N_{i,j}(t^-)\}} ( \cal{N}_{\nu,i,j,l}(dt)-\nu dt ) 
\end{align*}
  whose increasing process is expressed as follows
  \begin{align*}
   \langle  \cal{M}_f^N\rangle (t)=\frac{1}{N^2}(\lambda I_1^N(t)+\mu I_2^N(t)+\nu I_3^N(t))
   \end{align*}
  where, from careful calculations,
  \begin{align*}
    I_1^N(t)= &\sum_{i=1}^N \int_0^t \bigg( \left(\Delta_{1}^+(f)(Z_i^N(s))\right)^2 \mathbf{1}_{\{ S^N_{i}(s)<K\}}\Lambda^N(s)(\Sigma_K\cap \{y>0\})\\
    &  + \left(\Delta_{3,4}(f)(Z_i^N(s))\right)^2 \mathbf{1}_{\{V^N_i(s)>0\}} \Lambda^N(s)(\Sigma_{<K})\\
    &  +\frac{2}{N} \Delta_{1}^+(f)(Z_i^N(s))\; \Delta_{3,4}(f)(Z_i^N(s))\mathbf{1}_{\{V^N_{i}(s)>0,\; S^N_{i}(s)<K\}}\bigg) ds\\
    I_2^N(t)= &\sum_{i=1}^N \int_0^t \left(\Delta_{2,3}(f)(Z_i^N(s))\right)^2 R^N_i(s)ds\\
    I_3^N(t)= &\sum_{i=1}^N \int_0^t \bigg( \left(\Delta_{1,2}(f)(Z_i^N(s))\right)^2 R^{r,N}_i(s)+ \left(\Delta_{4}^-(f)(Z_i^N(s))\right)^2 V^{r,N}_i(s)\\
    &  +\frac{2}{N} \Delta_{1,2}(f)(Z_i^N(s))\; \Delta_{4}^-(f)(Z_i^N(s))X_{i,i}(s)
    \bigg) ds.
  \end{align*}
  The term with $X_{i,i}^N(s)$ comes from the fact that only sequences $(\cal{N}_{\nu,i,j,.},j\not =i)$ and $(\cal{N}_{\nu,j,i,.},j\not =i)$ are independent. See Remark~\ref{rem}.
  It yields then straightforwardly that there exist $C_0,\;C_1>0$ such that, for $1\leq i \leq 3$,
  \begin{align*}
    \| I_i^N\|_{\infty,T}\leq (C_0 N+C_1)T \|f\|^2_{\infty}.
  \end{align*}
  Thus, for 
  \begin{align}\label{martingale}
    \| \langle \cal{M}^N_f\rangle \|_{\infty,T}\leq (\lambda+\mu+\nu)\left( \frac{C_0}{N}+\frac{C_1}{N^2}\right)T\|f\|^2_{\infty}
    .
  \end{align}
  Thus, applying Cauchy-Schwarz then Doob's inequalities,
  \begin{align*}
    \left(\E\left( \sup_{0\leq s\leq T} |\cal{M}^N_f(s)|\right) \right) ^2 & \leq \E\left( \sup_{0\leq s\leq T} |\cal{M}^N_f(s)|^2\right)\\
      & \leq 4 \E((\cal{M}^N_f)^2(T))\\
      & =4 \E(\langle \cal{M}^N_f\rangle(T)),
  \end{align*}
 and  the martingale $(\cal{M}^N_f(t))$ converges in distribution to $0$ when $N$ tends to $\infty$.

 \subsection{Convergence of the empirical measure process}\label{convergenceempirical}
 This section is devoted to the proof of the mean-field convergence theorem (Theorem \ref{mean-field}). The proof is based on tightness and uniqueness    standard arguments using stochastic calculus and martingale theory. To be self-contained, the paper presents the detailed proof, via Propositions \ref{tightness} and \ref{uniqueness}.
 
  \begin{proposition}[Tightness of the empirical measure process]\label{tightness}
    The sequence $(\Lambda^N(t))$ is tight with respect to the convergence in distribution in $\cal{D}(\R_+,\cal{Y}^N )$, $\cal{Y}^N$ defined by equation~\eqref{YN}. Any limiting point $(\Lambda(t))$ is a continuous process with values in
    \begin{align}\label{Y}
      \cal{Y}=\{\Lambda \in \cal{P}(\Sigma_K),\sum_{(w,x,y,z)\in \Sigma_K} (x+y+z)\Lambda_{(w,x,y,z)}=s\},
    \end{align}
    solution of
    \begin{align}\label{ODE}
      \Lambda(t)(f)& = \Lambda(0)(f)+\lambda \int_0^t \Lambda(s)(\Sigma_K\cap \{y>0\})\Lambda(s)(\Delta_{1}^+(f)\mathbf{1}_{\Sigma_{<K}})ds\nonumber \\
    &+  \nu \int_0^t \Lambda(s)(\Delta_{1,2}(f)p_1)ds+\mu \int_0^t \Lambda(s)(\Delta_{2,3}(f)p_2)ds\nonumber \\
    &+  \lambda \int_0^t \Lambda(s)(\Sigma_{<K})\Lambda(s)(\Delta_{3,4}(f)\mathbf{1}_{\{y>0\}})ds+\nu \int_0^t \Lambda(s)(\Delta_{4}^-(f)p_4)ds
    \end{align}
    for any function $f$ on $\Sigma_K=\{(w,x,y,z)\in \N^4, w+x+y+z\leq K\}$ and $\Sigma_{<K}$  and the $p_i$'s defined by equations~\eqref{def<K} and~\eqref{p_i}.
  \end{proposition}
   \begin{proof} This amounts to proving that, for any function  $f$ on $\Sigma_K$, the sequence of processes $(\Lambda^N(t)(f))$ is 
     tight with respect to the topology of the uniform norm on compact sets. For this, using the  the modulus of continuity criterion (see Billinsley \cite{billingsley1999convergence}), it  suffices to prove that, for $T, \;\varepsilon, \;\eta>0$, there exist $\delta_0>0$ and $N_0\in \N$ such that, for all $\delta<\delta_0$ and all $N\geq N_0$,
     \begin{align}\label{modulus}
       \P\left( \underset{|s-t|<\delta}{\underset{0\leq s\leq t\leq T}{\sup}} |\Lambda^N(t)(f))-\Lambda^N(s)(f))|>\eta\right) <\varepsilon.
     \end{align}
     Let $T>0$, $\varepsilon>0$, $\eta>0$, $\delta>0$ and $s,t \in [0,T]$ such that $|s-t|<\delta$ be fixed. For the third term of the right-hand side of equation~\eqref{empirical_evolution}, there exists $C_2>0$ such that
     \begin{align*}
      \left| \int_s^t \Lambda^N(u)(\Sigma_K\cap \{y>0\})\Lambda^N(u)(\Delta_{1}^+(f)\mathbf{1}_{\{\Sigma_{<K}\}})du\right| \leq \delta C_2 \|f\|_{\infty}
     \end{align*}
     and the same holds for the other terms. Thus there exists $C_3>0$ such that
     \begin{align*}
      |\Lambda^N(t)(f))-\Lambda^N(s)(f))| \leq \delta C_3 \|f\|_{\infty}+| \cal{M}^N_f(t)-\cal{M}^N_f(s)|.
     \end{align*}
     Using equation~\eqref{martingale} for the martingale term, it holds that there exists $C_4>0$ such that
     \begin{align*}
     \E\left( \underset{|s-t|<\delta}{\underset{0\leq s\leq t\leq T}{\sup}} |\Lambda^N(t)(f))-\Lambda^N(s)(f))|\right) \leq \delta C_4 \|f\|_{\infty}+2\E\left(\underset{0\leq  t\leq T}{\sup} |\cal{M}_f^N(t)|\right).
     \end{align*}
     Thus, as the martingale term converges in distribution to $0$, there exist $\delta_0>0$ and $N_0\in \N$ such that, for all $\delta<\delta_0$ and all $N\geq N_0$, the left-hand side of the previous equation is less than $\eps$.
     Then, using Markov's inequality, the sequence of processes $(\Lambda^N(t)(f))$ satisfies equation~\eqref{modulus} thus is tight. Therefore, if $\Lambda$ is a limiting point of $\Lambda^N$, using again equation~\eqref{empirical_evolution}, as $(\cal{M}^N_f(t))$ converges in distribution to $0$,
     equation~\eqref{ODE} holds. As function $f$ has finite support, all the terms of the right-hand side are straightforwardly continuous on $t$.
   \end{proof}

   The following proposition  gives the uniqueness of a limiting point of $(\Lambda^N(t))$.
\begin{proposition}[Uniqueness]\label{uniqueness}
    For every probability $\Lambda_0$ on $\Sigma_K$, equation~\eqref{ODE} has at most one solution $(\Lambda(t))$ in $\cal{D}(\R_+, \cal{P}(\Sigma_K))$ with initial condition $\Lambda_0$.
\end{proposition}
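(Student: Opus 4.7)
The plan is to exploit two structural features of equation~\eqref{ODE}: the state space $\Sigma_K=\{(w,x,y,z)\in\N^4:w+x+y+z\leq K\}$ is \emph{finite}, so $\cal{P}(\Sigma_K)$ is a compact finite-dimensional simplex; and the right-hand side of \eqref{ODE} is polynomial (at most quadratic) in the coordinates of $\Lambda(s)$, with all coefficients $p_i$, $\mathbf{1}_{\Sigma_{<K}}$, $\mathbf{1}_{\{y>0\}}$ bounded by $K$ or by $1$. Uniqueness should then follow from a Gr\"onwall estimate in total variation, exactly in the spirit of the Gr\"onwall argument already used in the proof of Theorem~\ref{McKean-Vlasov}.

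Concretely, let $(\Lambda(t))$ and $(\Lambda'(t))$ be two solutions of \eqref{ODE} in $\cal{D}(\R_+,\cal{P}(\Sigma_K))$ with $\Lambda(0)=\Lambda'(0)=\Lambda_0$. For a test function $f$ on $\Sigma_K$ with $\|f\|_\infty\leq 1$ set
\begin{align*}
D(t)=\sup_{\|f\|_\infty\leq 1}\bigl|\Lambda(t)(f)-\Lambda'(t)(f)\bigr|,
\end{align*}
which is just the total variation distance between $\Lambda(t)$ and $\Lambda'(t)$. Subtracting the equations \eqref{ODE} for $\Lambda$ and $\Lambda'$, I would bound each of the five integrands separately. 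The linear terms $\nu\Lambda(s)(\Delta_{1,2}(f)p_1)$, $\mu\Lambda(s)(\Delta_{2,3}(f)p_2)$, $\nu\Lambda(s)(\Delta_4^-(f)p_4)$ involve test functions bounded by $2K\|f\|_\infty$, so their contribution to the difference is dominated by $2K(\mu+2\nu)D(s)$. For the two bilinear terms of the form $\Lambda(s)(A)\Lambda(s)(g)$ (with $A\in\{\Sigma_K\cap\{y>0\},\Sigma_{<K}\}$ and $g$ a $\Delta$-increment of $f$), I would use the standard splitting
\begin{align*}
\bigl|\Lambda(s)(A)\Lambda(s)(g)-\Lambda'(s)(A)\Lambda'(s)(g)\bigr|\leq \bigl|\Lambda(s)(A)\bigr|\,\bigl|\Lambda(s)(g)-\Lambda'(s)(g)\bigr|+\bigl|\Lambda(s)(A)-\Lambda'(s)(A)\bigr|\,\|g\|_\infty,
\end{align*}
which is bounded by $3\|f\|_\infty D(s)$ since measures have total mass $1$ and $\|g\|_\infty\leq 2\|f\|_\infty$. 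Collecting everything and taking the supremum over $\|f\|_\infty\leq 1$ yields a constant $C=C(\lambda,\mu,\nu,K)$ such that
\begin{align*}
D(t)\leq C\int_0^t D(s)\,ds,\qquad 0\leq t\leq T.
\end{align*}
Since $D$ is bounded by $2$ and measurable, Gr\"onwall's lemma forces $D\equiv 0$, hence $\Lambda(t)=\Lambda'(t)$ for all $t$.

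I do not anticipate a real obstacle here: the only point that needs genuine care is checking that every test function appearing after a $\Delta$-operator is uniformly bounded (which is automatic because $f$ has finite support in $\Sigma_K$ and $p_i\leq K$), and that the bilinear terms are handled by the add-and-subtract trick above. Alternatively, one could rephrase the argument by writing \eqref{ODE} coordinate by coordinate: taking $f=\mathbf{1}_{\{z_0\}}$ for each $z_0\in\Sigma_K$ turns \eqref{ODE} into a finite autonomous system of polynomial ODEs on the compact simplex $\cal{P}(\Sigma_K)$, whose right-hand side is globally Lipschitz, so the Cauchy--Lipschitz theorem applies directly. Either route is essentially a one-page calculation.
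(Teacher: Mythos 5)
Your proposal is correct and follows essentially the same route as the paper: subtract the two solutions of \eqref{ODE}, split the bilinear terms by the add-and-subtract trick, bound everything in total variation (the exact constant is immaterial), and conclude by Gr\"onwall's lemma. The only differences are cosmetic bookkeeping of constants, so no further comment is needed.
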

\begin{proof}
  Let  $\Lambda^1(t))$ and $\Lambda^2(t))$ in $\cal{D}(\R_+, \cal{P}(\Sigma_K))$ be two solutions of equation~\eqref{ODE} with initial condition $\Lambda_0$. For $f$  function on $\Sigma_K$ and $t\geq 0$, 
  \begin{align*}
   &  \Lambda^1(t)(f)-\Lambda^2(t)(f)\\
    &=  \lambda \int_0^t (\Lambda^1(s)-\Lambda^2(s))(\Sigma_K\cap \{y>0\})\Lambda^1(s)(\Delta_{1}^+(f)\mathbf{1}_{\Sigma_{<K}})ds\nonumber \\
   & +\lambda \int_0^t \Lambda^2(s)(\Sigma_K\cap \{y>0\})(\Lambda^1(s)-\Lambda^2(s))(\Delta_{1}^+(f)\mathbf{1}_{\Sigma_{<K}})ds\nonumber \\
    &+  \nu \int_0^t (\Lambda^1(s)-\Lambda^2(s))(\Delta_{1,2}(f)p_1+\Delta_{4}^-(f)p_4)ds+\mu \int_0^t (\Lambda^1(s)-\Lambda^2(s))(\Delta_{2,3}(f)p_2)ds\nonumber \\
    &+  \lambda \int_0^t (\Lambda^1(s)-\Lambda^2(s))(\Sigma_{<K})\Lambda^1(s)(\Delta_{3,4}(f)\mathbf{1}_{\{y>0\}})ds\\
    &+  \lambda \int_0^t \Lambda^2(s)(\Sigma_{<K})(\Lambda^1(s)-\Lambda^2(s))(\Delta_{3,4}(f)\mathbf{1}_{\{y>0\}})ds.
  \end{align*}
  Recall that, for a signed measure $\pi$  on $\Sigma_K$,
   \begin{align*}
  \|\pi \|_{TV}=\sup\{\left| \pi( f)\right| , f:\Sigma_K\rightarrow \R, \|f\|_{\infty}\leq 1\}.
   \end{align*}
  From the previous equation, it holds that
  \begin{align*}
   \| \Lambda^1(t)-\Lambda^2(t)\|_{TV}\leq (8\lambda+4\nu+2\mu) \int_0^t \| \Lambda^1(s)-\Lambda^2(s)\|_{TV} ds.
  \end{align*}
  Applying Gr\"onwall's lemma,
  \begin{align*}
    \| \Lambda^1(t)-\Lambda^2(t)\|_{TV}=0.
   \end{align*} 
  It ends the proof.
\end{proof}

{\bf Proof of Theorem~\ref{mean-field}}.
  Let $(w,x,y,z)\in \Sigma_K$  and $\Lambda_0=\delta_{(w,x,y,z)}$.  If $(\overline{Z}(t))$ is the
unique solution of equation~\eqref{uniquelimit}  and the measure valued process $(\Lambda(t))$  is defined, for $f$  function  on $\Sigma_K$, by
\begin{align*}
   \Lambda(t)(f)=\E(f(\overline{Z}(t))) 
  \end{align*}
then it is easy to check that $(\Lambda(t))$ is a solution of equation~\eqref{ODE}.
 The convergence
of $(\Lambda^N(t))$ follows from Propositions 1 and 2. 
    See Sznitman \cite[Proposition 2.2]{Sznitman1991topics} for  the propagation of chaos property.

\subsection{Probabilistic interpretation of the asymptotic process}
Note that the Fokker--Planck equation~\eqref{FPequation} is the functional form of the stochastic equation~\eqref{ODE}.  Recall that, in equation~\eqref{FPequation}, equalities in distribution hold, as
  \begin{align*}
 \iint_{[0,t]\times \R_+} \mathbf{1}_{\{0 \leq h \leq \overline{R^r}(s^-)\}}\overline{\cal{N}}_{\nu}(ds,dh)=   \int_0^t \sum_{l=0}^{\infty} \mathbf{1}_{\{l\leq \overline{R^r}(s^-)\}} \cal{N}_{\nu,l}(ds).
  \end{align*}
  Thus the non-homogeneous Markov process $(\overline{Z}(t))$ can be seen as the state process of four queues in tandem, with overall capacity $K$. This means that   $\overline{R^r}(t)$, $ \overline{R}(t)$, $\overline{V}(t)$ and $\overline{V^r}(t)$    are the numbers of customers in respectively
\begin{itemize}
\item[-] the first queue,  an infinite-server queue with service rate $\nu$,
\item[-] the second one, an infinite-server queue with service rate $\mu$,
\item[-] the third one, a one-server queue with variable service rate  $\lambda \P(\overline{S}(t)<K)$ at time $t$
\item[-]  and the last one, an infinite-server queue  with service rate $\nu$,
\end{itemize}
  while the arrival process is an non-homogeneous  Poisson process with intensity $\lambda \P(\overline{V}(t)>0)dt$. 

  \begin{figure}[!t]
\centering
\begin{tikzpicture}[scale=0.8]

\node (arrival) at (-2,0.5) {$\lambda \P(\overline{V}(t)>0) $};
\draw (arrival);
\draw[->] (-2,0) -- (1.2,0);

\draw[-] (1.5,-1.5) -- (1.5,1.5);
\draw[-] (2.5,-1.5) -- (2.5,1.5);
\draw[-] (1.5,1.5) -- (2.5, 1.5);
\draw[-] (1.5,1.2) -- (2.5, 1.2);
\draw[-] (1.5,0.9) -- (2.5, 0.9);
\draw[-] (1.5,0.6) -- (2.5, 0.6);

\node (Rr) at (2,2) {$R^r$};
\draw (Rr);
\node (etaR) at (2,-2) {rate $\eta_1$};
\draw (etaR);


\node (middle1) at (2.8,1.5) {$\nu$};
\draw (middle1);
\draw[->] (2.8,0) -- (4.2,0);

\draw[-] (4.5,-1.5) -- (4.5,1.5);
\draw[-] (5.5,-1.5) -- (5.5,1.5);
\draw[-] (4.5,1.5) -- (5.5, 1.5);
\draw[-] (4.5,1.2) -- (5.5, 1.2);
\draw[-] (4.5,0.9) -- (5.5, 0.9);
\draw[-] (4.5,0.6) -- (5.5, 0.6);

\node (R) at (5,2) {$R$};
\draw (R);
\node (roR) at (5,-2) {rate $\rho_1$};
\draw (roR);


\node (middle2) at (5.8,1.5) {$\mu$};
\draw (middle2);
\draw[->] (5.8,0) -- (7.2,0);

\draw[-] (7.5,.5) -- (9.5,.5);
\draw[-] (7.5,-.5)-- (9.5,-.5);
\draw[-] (9.5,.5) -- (9.5, -.5);
\draw[-] (9,.5) -- (9, -.5);
\draw[-] (8.5,.5) -- (8.5, -.5);

\node (V) at (8.4,1.4) {$V$};
\draw (V);
\node (ro2) at (8.5,-2) {rate $\rho_2$};
\draw (ro2);


\node (middle3) at (10.1,0.8) {\tiny{$\lambda \P(\overline{S}(t)<K)$}};
\draw (middle3);
\draw[->] (9.8,0) -- (11.2,0);

\draw[-] (11.5,-1.5) -- (11.5,1.5);
\draw[-] (12.5,-1.5) -- (12.5,1.5);
\draw[-] (11.5,1.5) -- (12.5, 1.5);
\draw[-] (11.5,1.2) -- (12.5, 1.2);
\draw[-] (11.5,0.9) -- (12.5, 0.9);
\draw[-] (11.5,0.6) -- (12.5, 0.6);

\node (Vr) at (12,2) {$V^r$};
\draw (Vr);
\node (etaR) at (12,-2) {rate $\eta_2$};
\draw (etaR);


\node (out) at (12.8,1.5) {$\nu$};
\draw (out);
\draw[->] (12.8,0) -- (14.2,0);

\end{tikzpicture}
\caption{Dynamics of $(\overline{Z}(t))$ as a tandem of four queues. The {\em vertical} queues are $M/M/\infty$ queues, the {\em horizontal} is a $M/M/1$ queue. The overall capacity is $K$.}
\label{fig:MMqueuesModel3}
  \end{figure}
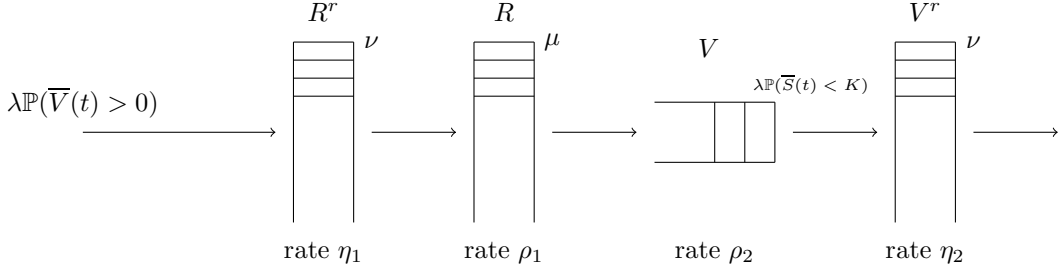
  
  Let  $\eta_1$, $\rho_1$, $\rho_2$ and $\eta_2$ be the arrival-to-service rate ratios for the four queues from the left to the right (as shown on Figure~\ref{fig:MMqueuesModel3}). By definition, one gets
  
  \begin{equation}\label{rhot}
    \begin{cases}
    \eta_1(t) &= \frac{\lambda}{\nu}\P(\overline{V}(t)>0), \quad   \rho_1 (t) = \frac{\lambda}{\mu} \P(\overline{V}(t)>0), \\
    \rho_2 (t) &= \dfrac{\P(\overline{V}(t)>0)}{\P(\overline{S}(t)<K)}, \quad \eta_2(t)  = \frac{\lambda}{\nu}\P(\overline{V}(t)>0).
    \end{cases}
\end{equation}

\section{Equilibrium of the asymptotic process}\label{SectionEquilibrium}

The quadruplet of the numbers of customers in such four queues in tandem with fixed arrival-to-service rate ratios $\eta_1$, $\rho_1$, $\rho_2$ and $\eta_2$ and finite overall capacity $K$ is an ergodic Markov process as an irreducible Markov process  on a finite state space. Moreover, the unique invariant probability  measure has a well-known product form 
given by 

\begin{align} \label{MeasureModel3}
\pi_{j,k,l,m}(\rho) = \frac{1}{Z(\rho)} \frac{\eta_1^j \rho_1^k}{j!k!} \rho_2^l \frac{\eta_2^m}{m!} 
\end{align}
where, to  shorten the notations,  $(\eta_1, \rho_1, \rho_2, \eta_2)$ is denoted by $\rho$ and  the normalizing constant is
\begin{align}
  Z( \rho)=\sum_{j+k+l+m\leq K}  \frac{\eta_1^j \rho_1^k}{j!k!} \rho_2^l \frac{\eta_2^m}{m!} .
 \end{align}

\noindent
If the process $(\overline{Z}(t))$ of the number of customers in  the four queues in tandem is at equilibrium then, denoting by $L_{\rho(t)}$ its  generator,  it holds that
\begin{align*}
  0=\pi(\rho(t)) L_{\rho(t)}
\end{align*}
where $\rho(t)=(\eta_1, \rho_1, \rho_2, \eta_2)(t)$ is given by equation~\eqref{rhot}. It means  that the equilibrium point is the probability measure $\pi(\rho)$ on $\cal{Y}$ defined by~\eqref{Y} where $\rho=(\eta_1, \rho_1, \rho_2, \eta_2)$ satisfies
  \begin{align}
     \eta_1 &= \frac{\lambda}{\nu}(1-\pi_{0V}(\rho)), \label{eta1}\\
    \rho_1  &= \frac{\lambda}{\mu} (1-\pi_{0V}(\rho)) , \label{rho1}\\
    \rho_2  &= \frac{1-\pi_{0V}(\rho)}{1-\pi_{S}(\rho)}, \label{rho2}\\
\eta_2  &= \frac{\lambda}{\nu}(1-\pi_{0V}(\rho)) \label{eta2}
  \end{align}
with  $\pi_{0V}(\rho)=\sum_{j+l+m\leq K} \pi_{j,0,l,m}(\rho)$ and
  $\pi_{S}(\rho)=\sum_{j+k+l+m = K} \pi_{j,k,l,m}(\rho)$. Viewing the tandem of four queues  as a station, $\pi_{0V}(\rho)$ is the probability that there is no car available  and $\pi_S(\rho)$ the probability that the station is saturated.
  
  \noindent
Because $\pi(\rho)$ has support on  $\cal{Y}$, it holds that
\begin{align}\label{s}
  s=\sum_{j+k+l+m\leq K} (k+l+m) \pi_{j,k,l,m}(\rho).
\end{align}

\begin{theorem}[Uniqueness and characterization of the equilibrium point]\label{Equilibrium}
  If $\nu$ is enough large then there exists a unique equilibrium point for the Fokker--Planck equation~\eqref{ODE} which is  $\pi(\rho)$ defined by equation~\eqref{MeasureModel3}  where $\rho= (\frac{\mu}{\nu} \rho_1,\rho_1, \rho_2,\frac{\mu}{\nu} \rho_1)$
  and $(\rho_1,\rho_2)$ 
  is the unique  solution of
   \begin{align}
\rho_1 &= \frac{\lambda}{\mu} (1-\pi_{0V}(\rho)) \label{FPE1},\\
s&= \sum_{j+k+l+m\leq K} (k+l+m) \pi_{j,k,l,m}(\rho). \label{FPE2}
   \end{align}
   \end{theorem}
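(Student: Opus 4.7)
The plan is to break the proof into three steps corresponding to the arguments sketched in the abstract: exploit the queueing interpretation to turn the equilibrium problem into a self-consistency equation with an explicit product-form candidate, reduce the resulting four-dimensional system to a two-dimensional fixed point, and finally apply a global inversion argument together with a combinatorial monotonicity computation, valid under the assumption that $\nu$ is large.

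For the first two steps I would proceed as follows. For any fixed parameter $\rho=(\eta_1,\rho_1,\rho_2,\eta_2)$ the time-homogeneous tandem of four queues has the explicit product-form invariant measure $\pi(\rho)$ given by~\eqref{MeasureModel3}; since freezing the macroscopic rates $\lambda\P(\overline{V}(t)>0)$ and $\lambda\P(\overline{S}(t)<K)$ in the McKean--Vlasov generator $L_{\rho(t)}$ produces exactly this tandem, any invariant measure of the Fokker--Planck equation~\eqref{ODE} must be of the form $\pi(\rho)$, with $\rho$ satisfying the self-consistency equations~\eqref{eta1}--\eqref{eta2} together with the mass-conservation constraint~\eqref{s}. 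Inspection of~\eqref{eta1}, \eqref{rho1} and~\eqref{eta2} immediately gives $\eta_1=\eta_2=\frac{\mu}{\nu}\rho_1$, so that the equilibrium problem reduces to solving the two-dimensional system~\eqref{FPE1}--\eqref{FPE2} in the unknowns $(\rho_1,\rho_2)$.

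Existence of a solution I would get from Brouwer's fixed point theorem: both $\pi_{0V}(\rho)$ and the moment $\sum(k+l+m)\pi_{j,k,l,m}(\rho)$ are rational, hence smooth, functions of $(\rho_1,\rho_2)$ on a compact rectangle, and the map sending $(\rho_1,\rho_2)$ to the right-hand sides of~\eqref{FPE1}--\eqref{FPE2} leaves such a rectangle invariant once the capacity constraint is taken into account. For uniqueness I would rewrite the system as $\Psi(\rho_1,\rho_2)=0$ and apply Hadamard's global inversion theorem: $\Psi$ is smooth and proper on the compact parameter domain, so it is enough to prove that the Jacobian of $\Psi$ is everywhere non-singular. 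The large-$\nu$ hypothesis enters at this point, because when $\nu$ is large one has $\eta_1,\eta_2=O(1/\nu)$ and $\pi(\rho)$ concentrates on configurations with $j=m=0$; in that regime the partial derivatives of $\pi_{0V}(\rho)$ and of the mean $\sum(k+l+m)\pi_{j,k,l,m}(\rho)$ with respect to $\rho_1$ and $\rho_2$ admit expressions as covariances under $\pi(\rho)$ and can be estimated so as to yield a Jacobian whose determinant is bounded away from zero.

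The main obstacle will be the sign/monotonicity analysis of this $2\times 2$ Jacobian. The partial derivatives in question take the form $\Cov_{\pi(\rho)}(\cdot,\cdot)/\rho_i$ in the truncated multivariate Poisson measure $\pi(\rho)$, and showing that the resulting determinant has a definite sign requires delicate combinatorial manipulation of truncated sums. The assumption that $\nu$ is large is precisely what makes $\eta_1$ and $\eta_2$ small, effectively decoupling the two outer infinite-server queues from the central part of the tandem and turning the combinatorial monotonicity into a tractable quantitative estimate; relaxing this assumption, as noted in the introduction, is expected to be possible but would require a substantially more refined combinatorial argument.
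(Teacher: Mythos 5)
Your outline reproduces the road map the paper itself announces (product form for the frozen tandem, reduction to a two--dimensional system, global inversion plus a combinatorial monotonicity), and the reduction $\eta_1=\eta_2=\frac{\mu}{\nu}\rho_1$ is correct. But the proposal stops exactly where the real work begins, and the two places where you substitute a general theorem for that work do not go through as stated. For existence, Brouwer's theorem does not apply directly: equation~\eqref{FPE2} is a level--set constraint $s=\sum(k+l+m)\pi_{j,k,l,m}(\rho)$, not a self--map in $\rho_2$, and you exhibit no invariant rectangle for any map realizing it. The paper instead first proves (Lemma~\ref{Theorem:diffeomorphism}) that the solution set of the analogue of~\eqref{FPE1} is the graph of a strictly increasing diffeomorphism $\rho_2=\phi(\rho_1)$ --- this rests on the combinatorial inequality $f(\rho_1,\rho_2)-\partial f/\partial\rho_1(\rho_1,\rho_2)>0$ on the zero set of $f$ --- and then shows that $\rho_2\mapsto g_K(\psi(\rho_2),\rho_2)$ is strictly increasing and onto $[0,+\infty)$, which yields existence and uniqueness simultaneously by monotonicity, with no appeal to Brouwer or Hadamard. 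For uniqueness, asserting that the $2\times 2$ Jacobian of $\Psi$ ``can be estimated so as to be bounded away from zero'' is precisely the missing content: the paper's Lemma~\ref{sCroissante} proves $\partial g_K/\partial\rho_1>0$ and $\partial g_K/\partial\rho_2>0$ by an induction on $K$ built on the ratios $r_{k,k-1}=p_k/p_{k-1}$, and none of that computation is replaced by your covariance heuristic. Moreover a nonsingular Jacobian on a compact rectangle with boundary does not by itself give injectivity; the topological hypotheses of the Hadamard--Caccioppoli theorem would still have to be verified.

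You also misidentify where the large--$\nu$ hypothesis enters. It is not used to make $\eta_1,\eta_2$ small so as to ``decouple'' the outer queues and control a Jacobian determinant; the paper performs an exact change of variables $\tilde\rho_1=(1+2\mu/\nu)\rho_1$, $a=\frac{\lambda}{\mu}(1+2\mu/\nu)$ that maps equations~\eqref{rho1}--\eqref{rho2} exactly onto the simple--reservation equations~\eqref{rho1_SR}--\eqref{rho2_SR} for \emph{every} $\nu>0$. The only obstruction is that the mass equation~\eqref{s} becomes~\eqref{s_mod}, with the weight $\frac{1+\mu/\nu}{1+2\mu/\nu}\,i+j$ in place of $i+j$, and the monotonicity lemma is only established for weight $1$; the large--$\nu$ assumption is invoked purely to conclude by continuity that the relevant derivative remains positive when this weight is close to $1$. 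Your perturbative picture, even if completed, would therefore address a different asymptotic regime rather than reproduce the clean reduction on which the paper's proof actually rests.
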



 \begin{remark}
The uniqueness of the equilibrium point is the main issue. Moreover, its characterization given by Theorem 3 is a major contribution which allows to derive quantitative results. Proving the uniqueness of the equilibrium point in dimension $2$ is quite rare in the literature. We can cite two papers. For the celebrated model by Gibbens, Hunt and Kelly \cite{gibbens1990bistability}, simulations highlight in this paper a range of parameters with two stable equilibrium points, called metastability phenomena. This fact was proved some $30$ years later  in \cite{martirosyan2020theequilibrium}. In \cite{baccelli2022migration}, the same question is solved for a migration-contagion model using a convexity argument. The arguments used in our paper are totally different.
  \end{remark}

\begin{remark}
  The assumption that $\nu$ is enough large is a technical assumption for the proof. It seems that the result is true for all $\nu>0$ but the proof is for the moment out of reach. 
  \end{remark}

Let us prove Theorem~\ref{Equilibrium} in five steps. Steps 2, 3 and 4  are devoted to the special case where $\nu$ tends to infinity. This case is called the simple reservation case. Indeed, when $\nu$ gets large, it turns out that the model corresponds to the case where the car is not reserved in advance, and the parking space is just reserved when the user takes the car. This model is studied in \cite{bourdais2020mean}.  Step 1 is here to present the  framework for the simple reservation case.   Steps 2, 3 and 4 exhibit the proof of  \cite[Theorem 2]{bourdais2020mean} omitted in~\cite{bourdais2020mean} for the simple reservation model.

{\footnotesize \bf STEP 1: THE SIMPLE RESERVATION CASE.}
For the model with simple reservation, the problem of existence and uniqueness of an equilibrium point amounts to finding $(\rho_1,\rho_2)$ such that
\begin{align}
         \rho_1  &= \frac{\lambda}{\mu} (1-\pi_{.,0}(\rho_1,\rho_2)) , \label{rho1_SR}\\
         \rho_2  &= \frac{1-\pi_{.,0}(\rho_1,\rho_2)}{1-\pi_{S}(\rho_1,\rho_2)}, \label{rho2_SR}\\
          s &= \sum_{i+j\leq K} (i+j) \pi_{i,j}(\rho_1, \rho_2).\label{s_SR}
\end{align}
where the invariant probability measure is now
\begin{align*}
  \pi_{i,j}(\rho_1,\rho_2)=\frac{1}{Z(\rho_1,\rho_2)}\frac{\rho_1^i}{i!}\rho_2^j
\end{align*}
with $Z(\rho_1,\rho_2)=
\sum_{i+j\leq K} \pi_{i,j}(\rho_1,\rho_2)$, 
$\pi_{.,0}=\sum_{i=0}^K  \pi_{i,0}$ and
  $\pi_{S}=\sum_{i+j = K} \pi_{i,j}$.


{\footnotesize  \bf STEP 2: EQUATIONS~\eqref{rho1_SR}-\eqref{s_SR} AMOUNT TO EQUATIONS~\eqref{rho1_SR} and~\eqref{s_SR}.}
Note that 
any $\rho$ solution of~\eqref{rho1_SR} and~\eqref{rho2_SR} lies in  $\Gamma= [0, \lambda/\mu \mathclose{[}\times \R^+$. For any $\rho \in \Gamma$, $\rho$ is solution of equation~\eqref{rho2_SR} because
       \begin{align*}
\rho_2 (1-\pi_S(\rho_1,\rho_2)) 	= \rho_2 \frac{1}{Z(\rho_1,\rho_2)}\sum_{i+j < K} \frac{\rho_1^i}{i!}\rho_2^j 
= \frac{1}{Z(\rho_1,\rho_2)}\sum_{j>0, i+j \leq K} \frac{\rho_1^i}{i!}\rho_2^j 
							&= 1-\pi_{.,0}(\rho_1,\rho_2).
\end{align*}

{\footnotesize  \bf STEP 3:  DIFFEOMORPHISM FROM EQUATION~\eqref{rho1_SR}. }
\begin{lemma}[Diffeomorphism]
\label{Theorem:diffeomorphism}
There exists a strictly increasing diffeomorphism $\phi: \mathopen{]}0, \lambda/\mu \mathclose{[} \rightarrow ]0, \infty \mathclose{[}$, and $\psi=\phi^{-1}$, such that  $(\rho_1, \rho_2)$ is a solution of equation~\eqref{rho1_SR} if and only if $\rho_2 = \phi(\rho_1)$.
\end{lemma}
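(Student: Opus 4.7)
The plan is to rewrite equation~\eqref{rho1_SR} as $\pi_{.,0}(\rho_1,\rho_2)=1-\mu\rho_1/\lambda$. Since $\pi_{.,0}\in\mathopen{]}0,1\mathclose{[}$ whenever $(\rho_1,\rho_2)\in\mathopen{]}0,\infty\mathclose{[}^2$, any solution of interest must satisfy $\rho_1\in\mathopen{]}0,\lambda/\mu\mathclose{[}$. For each such $\rho_1$, I will define $\phi(\rho_1)$ as the unique $\rho_2>0$ solving this equation, and then show that $\phi$ is a $C^\infty$ strictly increasing bijection onto $\mathopen{]}0,\infty\mathclose{[}$.

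Write $\pi_{.,0}(\rho_1,\rho_2)=A(\rho_1)/Z(\rho_1,\rho_2)$ with $A(\rho_1)=\sum_{i=0}^K\rho_1^i/i!$ independent of $\rho_2$. Since $Z$ is a polynomial in $\rho_2$ with nonnegative coefficients and at least one positive-degree term (for $K\geq 1$), $\partial_{\rho_2}Z>0$ on $\mathopen{]}0,\infty\mathclose{[}$, so $\rho_2\mapsto\pi_{.,0}(\rho_1,\rho_2)$ is continuous and strictly decreasing, with $\pi_{.,0}(\rho_1,0^+)=1$ and $\pi_{.,0}(\rho_1,\rho_2)\to 0$ as $\rho_2\to\infty$. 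For $\rho_1\in\mathopen{]}0,\lambda/\mu\mathclose{[}$, the value $1-\mu\rho_1/\lambda$ lies in $\mathopen{]}0,1\mathclose{[}$, so the equation admits a unique solution $\phi(\rho_1)>0$. Smoothness of $\phi$ then follows from the implicit function theorem applied to $F(\rho_1,\rho_2)=\pi_{.,0}(\rho_1,\rho_2)-1+\mu\rho_1/\lambda$ (which is $C^\infty$ with $\partial_{\rho_2}F\neq 0$), and implicit differentiation yields
\begin{align*}
\phi'(\rho_1)=-\frac{\partial_{\rho_1}\pi_{.,0}+\mu/\lambda}{\partial_{\rho_2}\pi_{.,0}}.
\end{align*}

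Since $\partial_{\rho_2}\pi_{.,0}<0$, strict monotonicity $\phi'>0$ reduces to $\partial_{\rho_1}\pi_{.,0}>-\mu/\lambda$; I will in fact establish the stronger bound $\partial_{\rho_1}\pi_{.,0}>0$, which is the main obstacle. Using $(\partial_{\rho_1}Z)/Z=\E[I]/\rho_1$ where $(I,J)\sim\pi(\rho_1,\rho_2)$, a short computation from $\partial_{\rho_1}\log\pi_{i,j}=(i-\E[I])/\rho_1$ gives
\begin{align*}
\partial_{\rho_1}\pi_{.,0}(\rho_1,\rho_2)=\frac{\Cov(I,\mathbf{1}_{\{J=0\}})}{\rho_1}.
\end{align*}
Conditionally on $J=n$, $I$ has the law of a Poisson($\rho_1$) variable truncated to $\{0,\ldots,K-n\}$; by the standard coupling, such truncated Poisson laws are stochastically increasing in their upper truncation level, hence $n\mapsto\E[I\mid J=n]$ is strictly decreasing. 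In particular $\E[I\mid J=0]>\E[I\mid J\geq 1]$, and therefore $\Cov(I,\mathbf{1}_{\{J=0\}})>0$, as required.

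Finally, the strictly increasing continuous function $\phi$ has limits $\phi(0^+)$ and $\phi((\lambda/\mu)^-)$ in $[0,\infty]$. Passing to these limits in $\pi_{.,0}(\rho_1,\phi(\rho_1))=1-\mu\rho_1/\lambda$ and using that $\pi_{.,0}(\rho_1,\rho_2)=1$ forces $\rho_2=0$ while $\pi_{.,0}(\rho_1,\rho_2)\to 0$ requires $\rho_2\to\infty$, one obtains $\phi(0^+)=0$ and $\phi((\lambda/\mu)^-)=+\infty$. Hence $\phi$ is a $C^\infty$ strictly increasing bijection from $\mathopen{]}0,\lambda/\mu\mathclose{[}$ onto $\mathopen{]}0,\infty\mathclose{[}$, and the inverse function theorem gives that $\psi=\phi^{-1}$ is a diffeomorphism.
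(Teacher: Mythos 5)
Your proof is correct, but it follows a genuinely different route from the paper's. The paper clears denominators, works with the polynomial $f(\rho_1,\rho_2)=(\lambda/\mu-\rho_1)Z(\rho_1,\rho_2)-(\lambda/\mu)\sum_{i\le K}\rho_1^i/i!$, obtains $\phi$ from the global inversion theorem applied to $(\rho_1,\rho_2)\mapsto(\rho_1,f(\rho_1,\rho_2))$, and then proves $\partial f/\partial\rho_1<0$ \emph{on the zero set of $f$} by a term-by-term combinatorial estimate (showing $f-\partial f/\partial\rho_1>0$ via the inequality $K!/((j+k)!(K-j)!)\ge 1/k!$). You instead keep the equation in the form $\pi_{.,0}(\rho_1,\rho_2)=1-\mu\rho_1/\lambda$, get existence and uniqueness of $\phi(\rho_1)$ elementarily from the strict decrease of $\rho_2\mapsto\pi_{.,0}$ together with its boundary values $1$ and $0$ (no global inversion theorem needed), and replace the combinatorial core by the probabilistic identity $\partial_{\rho_1}\pi_{.,0}=\Cov(I,\mathbf{1}_{\{J=0\}})/\rho_1$ plus stochastic monotonicity of truncated Poisson laws in the truncation level. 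Note that your positivity statement is strictly stronger than the paper's: since $f=-(\lambda/\mu)Z\,(\pi_{.,0}-1+\mu\rho_1/\lambda)$, the paper's condition $\partial f/\partial\rho_1<0$ on $\{f=0\}$ is equivalent to $\partial_{\rho_1}\pi_{.,0}>-\mu/\lambda$ there, whereas you prove $\partial_{\rho_1}\pi_{.,0}>0$ everywhere. Your argument is shorter and more conceptual (a correlation inequality by conditioning on $J$), and it also makes explicit the surjectivity of $\phi$ onto $\mathopen{]}0,\infty\mathclose{[}$, which the paper only addresses implicitly in Step~4; the paper's computation, on the other hand, is self-contained and elementary, and its intermediate inequality \eqref{Majoration2} is of the same combinatorial flavour reused in the monotonicity Lemma~\ref{sCroissante}. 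Both arguments are valid (implicitly assuming $K\ge 1$, as the lemma is degenerate otherwise).
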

\begin{proof}


  First, $(\rho_1, \rho_2) \in \Gamma$  is solution of equation~\eqref{rho1_SR} if and only if $f(\rho_1, \rho_2) = 0$ where $f$ is the  $\mathcal{C}^{\infty}$ function defined by
\begin{align}\label{f}
f(\rho_1, \rho_2) = \left(\frac{\lambda}{\mu}-\rho_1 \right)Z(\rho_1, \rho_2) - \frac{\lambda}{\mu}\sum\limits_{i=0}^K \frac{\rho_1^i}{i!}.
\end{align}
 
To prove the existence of a  strictly increasing diffeomorphism $\phi$ which maps $\rho_1$ to $\rho_2$, we  apply the global inverse function theorem to  auxiliary function $h$ defined on $\Gamma$ by
\begin{align*}
  h(\rho_1,\rho_2)=(\rho_1,f(\rho_1,\rho_2)).
  \end{align*}
Indeed, $h$ is injective if and only if, for any $\rho_1 \in ]0, \lambda/\mu[ $ and $\rho_2,\, \rho'_2 \in ]0, +\infty[$,  $f(\rho_1,\rho_2)=f(\rho_1,\rho_2')$ implies that $\rho_2=\rho_2'$. As $f$ is $C^1$ on $]0, \lambda/\mu[  \times ]0, +\infty[$, it is sufficient to prove that $\partial f/\partial \rho_2 \not = 0 $ on $]0, \lambda/\mu[  \times ]0, +\infty[$  to obtain that $\rho_2\mapsto f(\rho_1,\rho_2)$ is strictly monotone thus injective. 

Note that it is easy to see that $\partial f/\partial \rho_2 $ is positive on $]0, \lambda/\mu[  \times ]0, +\infty[$. Indeed, since $\rho_1 < \lambda/\mu$ and $\rho_2\mapsto Z(\rho_1, \rho_2)$ is non decreasing,
$$\frac{\partial f}{\partial \rho_2}(\rho_1, \rho_2) = \left(\frac{\lambda}{\mu}-\rho_1 \right) \frac{\partial Z}{\partial \rho_2}(\rho_1, \rho_2)  >0  \text{.}$$
As a consequence, $h$ is injective and $C^1$ on $]0, \lambda/\mu[  \times ]0, +\infty[$.  By global inversion function theorem, $h^{-1}$ is $C^1$ on $h \big(]0, \lambda/\mu[  \times ]0, +\infty[\big)$ and it exists $\phi$  $C^1$  defined on $]0,\lambda/\mu[$ by
 \begin{align*}
  h^{-1}(\rho_1,0)=(\rho_1,\phi(\rho_1)).
\end{align*}  
Thus, to prove that  diffeomorphism $\phi$ is strictly increasing on $]0, \lambda/\mu[$, it amounts to showing that, for all $\rho_1 \in ]0, \lambda/\mu[$, one gets
 \begin{align*}
   \frac{\partial f}{\partial \rho_1}(\rho_1, \phi(\rho_1)) < 0,
    \end{align*}
 or equivalently that, for all $(\rho_1, \rho_2) \in ]0, \lambda/\mu[  \times ]0, +\infty[$ such that $f(\rho_1, \rho_2)=0$,
    \begin{align*}
      \frac{\partial f}{\partial \rho_1}(\rho_1, \rho_2) < 0.
       \end{align*}

First, from equation~\eqref{f}, one obtains the first partial derivative of function $f$,
\begin{align}\label{df/dr_R}
\frac{\partial f}{\partial \rho_1}(\rho_1, \rho_2) = -Z(\rho_1, \rho_2) 
+ \left(\frac{\lambda}{\mu}-\rho_1 \right) \sum_{i+j \leq K-1} \frac{\rho_1^i}{i!}  \rho_2^j-\frac{\lambda}{\mu} \sum\limits_{i=0}^{K-1}\frac{\rho_1^i}{i!}
\end{align}
thus, subtracting equation~\eqref{df/dr_R} to equation~\eqref{f} yields
\begin{align}\label{diff}
 f(\rho_1, \rho_2)-\frac{\partial f}{\partial \rho_1}(\rho_1, \rho_2) = Z(\rho_1, \rho_2) 
+ \left(\frac{\lambda}{\mu}-\rho_1 \right) \sum_{j=0}^K \frac{\rho_2^j \rho_1^{K-j}}{(K-j)!} - \frac{\lambda}{\mu} \frac{\rho_1^K}{K!}.
\end{align}
By equation~\eqref{f}, $f(\rho_1, \rho_2)=0$ can be rewritten
\begin{align*}
\frac{\lambda}{\mu}-\rho_1=\frac{\lambda}{\mu Z(\rho_1, \rho_2)}\sum\limits_{i=0}^K \frac{\rho_1^i}{i!}.
\end{align*}
Thus, the second term of the right-hand side of equation~\eqref{diff} is 
\begin{multline}\label{Majoration}
  \left(\frac{\lambda}{\mu}-\rho_1 \right) \sum_{j=0}^K \frac{\rho_2^j \rho_1^{K-j}}{(K-j)!}= \frac{\lambda}{\mu} \frac{1}{Z(\rho_1, \rho_2)} \sum\limits_{i=0}^K \frac{\rho_1^i}{i!} \sum\limits_{j=0}^K \frac{\rho_2^j \rho_1^{K-j}}{(K-j)!}   \\
  = \frac{\lambda}{\mu} \rho_1^K \frac{1}{Z(\rho_1, \rho_2)}  \sum\limits_{  i,j=0}^K \frac{\rho_1^{i-j}\rho_2^j}{i!(K-j)!}\geq \frac{\lambda}{\mu} \rho_1^K \frac{1}{Z(\rho_1, \rho_2)}  \sum\limits_{0 \leq j \leq i \leq K} \frac{\rho_1^{i-j}\rho_2^j}{i!(K-j)!}
  \end{multline}
using that all  the terms are positive.
For the sum of the right-hand side of~\eqref{Majoration}, it holds that
\begin{align*}
\sum\limits_{0 \leq j \leq i \leq K} \frac{\rho_1^{i-j}\rho_2^j}{i!(K-j)!} &= \frac{1}{K!} \sum\limits_{j+k\leq K} \frac{K!\rho_1^{k}\rho_2^j}{(j+k)!(K-j)!}  
  \geq \frac{Z(\rho_1, \rho_2)}{K!}
\end{align*}
using that, for any $j,k \in \N, j+k\leq K$, 
\begin{align*}
\frac{K!}{(j+k)!(K-j)!} = \frac{1}{k!} \frac{K!}{(K-j)!}\frac{k!}{(j+k)!}= \frac{1}{k!}\prod_{i=0}^{j-1}\frac{K-i}{j+k-i} \geq \frac{1}{k!}.
\end{align*}
In conclusion,  equation~\eqref{Majoration} gives that
\begin{equation}
\label{Majoration2}
\left(\frac{\lambda}{\mu}-\rho_1 \right) \sum\limits_{k=0}^K \frac{\rho_2^K \rho_1^{K-k}}{(K-k)!} \geq  \frac{\lambda}{\mu} \frac{\rho_1^K}{K!} \text{.}
\end{equation}

Plugging equation~\eqref{Majoration2} in  equation~\eqref{diff} and using that $Z(\rho_1, \rho_2)>0$,  it turns out  that 
$$f(\rho_1, \rho_2)- \frac{\partial f}{\partial \rho_1}(\rho_1, \rho_2) > 0.$$ 
Therefore, as $f(\rho_1, \rho_2)=0$,
$$\frac{\partial f}{\partial \rho_1} (\rho_1, \rho_2)<0  $$
for all $(\rho_1, \rho_2) \in ]0, \lambda/\mu[  \times ]0, +\infty[$ such that $f(\rho_1, \rho_2)=0$.

\end{proof}

Lemma~\ref{Theorem:diffeomorphism}  means that the solutions of equation~\eqref{rho1_SR} can be expressed with only one parameter. Note that it will  be useful for the study of the equilibrium, especially  in calculations to obtain asymptotics. This concludes the third step of the proof.









{\footnotesize  \bf STEP 4: A MONOTONICITY ARGUMENT TO CONCLUDE SIMPLE RESERVATION CASE.}
After equations~\eqref{rho1_SR} and~\eqref{rho2_SR}, let us focus now on  equation~\eqref{s_SR}. The idea is to prove that the mean number in a tandem of two queues with total capacity $K$  is a strictly increasing function of both arrival-to-service rates. It generalises the monotonicity argument in the similar proof in \cite[Section 3.1]{fricker2016incentives}. Then using Lemma~\ref{Theorem:diffeomorphism}, it  concludes to the existence and uniqueness of the equilibrium point.
\begin{lemma}[Monotonicity]
\label{sCroissante}
The average number $\mathbb{E}(R+V)$ of vehicles and reserved spaces per station,  
 where $(R,V)$ is a random variable with distribution $\pi(\rho_2 ,\rho_1)$,
is a strictly increasing function of both $\rho_2$ and $\rho_1$. 
\end{lemma}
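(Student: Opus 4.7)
The plan is to compute $\partial s/\partial \rho_k$ (for $k=1,2$) with $s(\rho_1,\rho_2)=\E_\pi(R+V)$ and rewrite each derivative as a strictly positive covariance under $\pi$ via a conditioning argument. Write $N:=R+V$ for the total number of customers. Since $\pi_{i,j}(\rho_1,\rho_2)\propto \rho_1^i\rho_2^j/i!$ on the simplex $\{i+j\leq K\}$, a short exponential-family computation gives the log-derivative identities
\begin{equation*}
\frac{\partial \log \pi_{i,j}}{\partial \rho_1}=\frac{i-\E_\pi(i)}{\rho_1},\qquad
\frac{\partial \log \pi_{i,j}}{\partial \rho_2}=\frac{j-\E_\pi(j)}{\rho_2},
\end{equation*}
so that $\partial \E_\pi(g)/\partial \rho_1=\Cov_\pi(g,i)/\rho_1$ and $\partial \E_\pi(g)/\partial \rho_2=\Cov_\pi(g,j)/\rho_2$ for any observable $g$. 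Taking $g=N$ and using the tower identity $\Cov_\pi(N,i)=\Cov_\pi(N,\E_\pi(i\mid N))$, I get
\begin{equation*}
\frac{\partial s}{\partial \rho_1}=\frac{\Cov_\pi(N,m(N))}{\rho_1},\qquad
\frac{\partial s}{\partial \rho_2}=\frac{\Cov_\pi(N,m^{(j)}(N))}{\rho_2},
\end{equation*}
with $m(n):=\E_\pi(i\mid N=n)$ and $m^{(j)}(n):=\E_\pi(j\mid N=n)$.

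The crux is to prove that both $m$ and $m^{(j)}$ are strictly increasing on $\{0,\ldots,K\}$. For $m$, conditional on $N=n$ the law of $i$ is a Poisson$(\rho_1/\rho_2)$ truncated to $\{0,\ldots,n\}$; a one-line calculation shows that, for any non-negative weights $(q_k)_{k\geq 0}$, the truncated mean $\sum_{k=0}^n k q_k/\sum_{k=0}^n q_k$ is strictly increasing in $n$ as soon as $q_{n+1}>0$, which here applies whenever $\rho_1>0$. For $m^{(j)}$, the conditional law $\P_\pi(j=l\mid N=n)\propto \rho_2^l\rho_1^{n-l}/(n-l)!$ is not a standard truncated distribution, so I verify a monotone likelihood ratio (MLR) property directly. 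Writing $Z_n:=\sum_{k=0}^n \rho_1^k\rho_2^{n-k}/k!$, a direct computation gives
\begin{equation*}
\frac{\P_\pi(j=l\mid N=n+1)}{\P_\pi(j=l\mid N=n)}=\frac{\rho_1}{n+1-l}\cdot\frac{Z_n}{Z_{n+1}}\qquad (l=0,\ldots,n),
\end{equation*}
which is strictly increasing in $l$. MLR dominance on $\{0,\ldots,n\}$, combined with the fact that the level-$(n+1)$ law carries an extra atom at $l=n+1$ beyond the support of the level-$n$ law, yields strict stochastic dominance between the two conditional laws, and hence $m^{(j)}(n+1)>m^{(j)}(n)$.

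To conclude, I invoke the elementary identity
\begin{equation*}
\Cov_\pi(N,f(N))=\tfrac{1}{2}\,\E\!\left[(N-N')(f(N)-f(N'))\right],
\end{equation*}
where $N,N'$ are i.i.d. with the law of $N$ under $\pi$. Since $\pi_{i,j}>0$ for every $(i,j)$ with $i+j\leq K$ whenever $\rho_1,\rho_2>0$, the law of $N$ charges every point of $\{0,\ldots,K\}$, so $\P(N\neq N')>0$; together with the strict monotonicity of $m$ and $m^{(j)}$ established above, both $\Cov_\pi(N,m(N))$ and $\Cov_\pi(N,m^{(j)}(N))$ are strictly positive, giving $\partial s/\partial \rho_1>0$ and $\partial s/\partial \rho_2>0$. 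The main technical obstacle is the MLR verification for $m^{(j)}$: unlike the conditional law of $i$, the conditional law of $j$ given $N=n$ does not reduce to a one-parameter truncated Poisson, so the monotonicity has to be extracted from the likelihood ratio rather than from a standard truncation lemma; once this is in hand, the rest of the proof is routine exponential-family bookkeeping.
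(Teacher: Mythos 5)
Your proof is correct, but it takes a genuinely different route from the paper's. The paper groups the product-form measure by total occupancy, writing $g_K=\E(R+V)=\sum_k k p_k/\sum_k p_k$ with $p_k=\sum_{j\le k}\rho_2^j\rho_1^{k-j}/(k-j)!$, proves by explicit term-by-term combinatorial bounds that each ratio $r_{k,k-1}=p_k/p_{k-1}$ is increasing in $\rho_1$ and $\rho_2$, and then runs an induction on the capacity $K$ through the identity $g_K=(1-u_K)g_{K-1}+Ku_K$ with $u_K=p_K/\sum_k p_k$. You instead use the exponential-family identity $\partial_{\rho_1}\E_\pi(g)=\Cov_\pi(g,i)/\rho_1$ (and its $\rho_2$ analogue), condition on $N=i+j$ to reduce everything to the strict monotonicity of the conditional means $n\mapsto\E_\pi(i\mid N=n)$ and $n\mapsto\E_\pi(j\mid N=n)$ (via the truncated-Poisson observation and the MLR computation), and conclude with the Chebyshev correlation inequality $\Cov_\pi(N,f(N))=\tfrac12\E[(N-N')(f(N)-f(N'))]>0$. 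The two arguments are in fact hinged on the same core fact in different clothing: since $\partial_{\rho_2}\log p_k=\E_\pi(j\mid N=k)/\rho_2$, the paper's monotonicity of $r_{k,k-1}$ in $\rho_2$ is literally equivalent to your monotonicity of $m^{(j)}$, and likewise for $\rho_1$ and $m$. What your version buys is a shorter, more conceptual proof that dispenses with the induction on $K$ and with the explicit binomial estimates such as $K!/((j+k)!(K-j)!)\ge 1/k!$; what the paper's version buys is a fully elementary, self-contained computation. Two minor points to make explicit in a final write-up: the covariance identities require $\rho_1,\rho_2>0$ (the boundary of $\Gamma$ is then handled by continuity, exactly as in the paper), and strict positivity of the covariances needs $K\ge 1$ so that $N$ is non-degenerate (for $K=0$ the statement is vacuous, matching the paper's remark that $g_0$ is constant).
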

\begin{proof}
  Let $ (\rho_1,\rho_2)\mapsto \E(R+V)$ be denoted by $g_K$. It is sufficient to prove  that, for all $(\rho_1,\rho_2) \in \Gamma$, 
$$\frac{\partial g_K}{\partial \rho_1}(\rho_1,\rho_2) >0 \text{ and }  \frac{\partial g_K}{\partial \rho_2}(\rho_1,\rho_2) >0$$
 by induction on $K$. 

By a change of indexes, $g_K$ can be rewritten
$$g_K  = \frac{\sum\limits_{i+j \leq K} (i+j)\frac{\rho_1^i}{i!}\rho_2^j }{\sum\limits_{i+j \leq K}\frac{\rho_1^i}{i!}\rho_2^j}   =  \frac{\sum\limits_{k=0}^K k p_k}{\sum\limits_{k=0}^K p_k} $$
where, by definition,
$$p_k = \sum\limits_{j=0}^k \rho_2^j \frac{\rho_1^{k-j}}{(k-j)!} \text{.}$$
Define also, for $(k,l)\in \N^2$,
$$r_{l,k} = \frac{p_l}{p_k}.$$

Let $k>0$ be fixed. We first show that $r_{k,k-1}$ is an increasing function of both $\rho_1$ and $\rho_2$. Indeed

$$r_{k,k-1} = \frac{p_k}{p_{k-1}} = \frac{ \rho_2 p_{k-1} + \rho_1^{k}/k! }{p_{k-1}} = \rho_2 + \frac{\rho_1^{k}}{k!}\frac{1}{p_{k-1}} \text{,}$$

thus
\begin{align}\label{dRhoV}
\frac{\partial r_{k,k-1}}{\partial \rho_2} = 1 - \frac{\rho_1^{k}}{k!} \frac{\partial p_{k-1}}{\partial \rho_2} \frac{1}{p_{k-1}^2}.
\end{align}

But, by definition of $p_k$,
\begin{align}\label{arche}
\rho_1^{k} \frac{\partial p_{k-1}}{\partial \rho_2} = \sum\limits_{j=1}^{k-1} j \rho_2^{j-1}\frac{\rho_1^{2k-j-1}}{(k-j-1)!} = \sum\limits_{j=0}^{k-2} (j+1) \rho_2^{j}\frac{\rho_1^{2k-j-2}}{(k-j-2)!}.
\end{align}

And, using that all terms of the sum in the following equation are positive,
\begin{align}\label{biche}
k!p_{k-1}^2 &= k! \sum\limits_{1 \leq u,v \leq k} \rho_2^{u+v-2} \frac{\rho_1^{2k-u-v}}{(k-u)!(k-v)!} \nonumber\\
	&> \sum\limits_{i=0}^{k-2} \rho_2^{i}\rho_1^{2k-i-2} \sum\limits_{j=1}^{i+1} \frac{k!}{(k-2-i+j)!(k-j)!} \text{.} 
\end{align}

For all $j,\;0\leq j  \leq i+1$, while
 $k-2-i+j < k$, it holds that

$$\frac{k!}{(k-2-i+j)!(k-j)!} > \frac{1}{(k-i-2)!}$$
and then 
$$\sum\limits_{j=1}^{i+1} \frac{k!}{(k-2-i+j)!(k-j)!} > \frac{i+1}{(k-i-2)!} \text{.}$$
Plugging in equation~\eqref{biche} and comparing with  equation~\eqref{arche}, it gives
$$k!p_{k-1}^2 > \rho_1^{k} \frac{\partial p_{k-1}}{\partial \rho_2}.$$ 
Therefore, using ~\eqref{dRhoV}, it allows to conclude that $$\frac{\partial r_{k,k-1}}{\partial \rho_2} >0 \text{.}$$

Moreover,
\begin{align}\label{chat}
\frac{\partial r_{k,k-1}}{\partial \rho_1} = \frac{\rho_1^{k-1}}{(k-1)!p_{k-1}} \left( 1 - \frac{\rho_1}{kp_{k-1}}\frac{\partial p_{k-1}}{\partial \rho_1} \right)>0 \text{.}
\end{align}
because it is easily checked that
$\partial p_{k-1}/\partial \rho_1= p_{k-2}$ and then
$kp_{k-1} > \rho_1 p_{k-2}$.

Therefore, if $l>k$, $r_{l,k} = \prod\limits_{i=k+1}^l r_{i,i-1}$ is an increasing function of $\rho_2$ and $\rho_1$. This gives that $u_K$ defined by 

$$u_K=\frac{p_K}{\sum\limits_{k=0}^K p_k}=\frac{1}{\sum\limits_{k=0}^K r_{k,K}}$$
is non decreasing in $x \in \{ \rho_1,\rho_2 \}$, because $r_{k,K}=1/r_{K,k}$ is non increasing in $x$.

Note that $g_0$ is constant with $x$ and that
$$g_K = (1-u_K) g_{K-1} + K u_K,$$
which yields that
$$\frac{\partial g_K}{\partial x} = (K-g_{K-1}) \frac{\partial u_K}{\partial x} + (1-u_K) \frac{\partial g_{K-1}}{\partial x} \text{.}$$

Since $K-g_{K-1} > 0$, $u_K <1$ and $\partial u_K/\partial x >0$, by induction we can conclude that $\partial g_K/\partial x>0$ for all $K \geq 1$. It ends the proof.
\end{proof}

By step 2, it remains to prove that, for any $s>0$, there exists  a unique  $(\rho_1,  \rho_2)$ solution of both equations~\eqref{rho1_SR} and ~\eqref{s_SR}.
By Lemma 1, equation~\eqref{rho1_SR} can be rewritten $\rho_1=\psi_{\rho_2}(\rho_2)$ with $\rho_2 \geq 0$. Then equation~\eqref{s_SR} becomes
\begin{align}\label{s_K}
  s=g_K( \psi_{\rho_2}(\rho_2),\rho_2).
  \end{align}
Let us denote  the right-hand side of equation~\eqref{s_K} by function $s_K$ defined on $[0,+\infty[$   which maps
$\rho_2$ to $g_K(\psi_{\rho_2}(\rho_2),\rho_2)=\E(R+V)$. It is sufficient to prove that  $s_K$  is  strictly increasing.
It is true, using that
$$s_K' =  \frac{\partial g_K}{\partial \rho_2} + \frac{\partial g_K}{\partial \rho_1} \psi'_{\rho_2} $$
as equation~\eqref{s_K} holds with $ \psi_{\rho_2}$ at $\rho_2$ but also in a neighborhood of $\rho_2$ and using both Lemmas~\ref{Theorem:diffeomorphism} and ~\ref{sCroissante}. To conclude the proof, one can easily check that $s_K$ covers the whole interval $[0,+\infty[$. Indeed if $\rho_2 = 0$, the only $\rho_1$ solution of $f(\rho_1,\rho_2)=0$ is $0$ and $E(R+V)=0$. Similarly, when $\rho_2$ tends to infinity, $\rho_1$ has to tend to $\lambda/\mu$ to keep $f(\rho_1, \rho_2) = 0$ and $E(R+V)$ tends to $+\infty$. It ends the proof for the single reservation case.





{\footnotesize  \bf STEP 5: THE DOUBLE RESERVATION CASE.}
Straightforwardlly  equations~\eqref{eta1} and \eqref{eta2} leads  to
\begin{align*}
   \eta_1 = \eta_2 =\frac{\mu}{\nu} \rho_1. 
 \end{align*}
Then the main argument is that equations~\eqref{rho1} and  ~\eqref{rho2}  can be rewritten as equations~\eqref{rho1_SR} and ~\eqref{rho2_SR} where $\rho_1$ is replaced by  $\tilde{\rho_1}=(1+2\mu/\nu)\rho_1$ and $\lambda/\mu$ by $a=\lambda/\mu(1+2\mu/\nu)$. Indeed, by straightforward algebra, it holds that
$\pi_{0V}(\rho)=\pi_{.,0}(\tilde{\rho_1}, \rho_2)$ and $\pi_{S}(\rho)=\pi_{S}(\tilde{\rho_1},\rho_2)$.

Unfortunately,  ~\eqref{s} is not rewritten as ~\eqref{s_SR}  with the previous change of variables  $\tilde{\rho_1}=(1+2\mu/\nu)\rho_1$ but, with careful calculations, as 
\begin{align}
  s &= \sum_{i+j\leq K} \left(\frac{1+\mu/\nu}{1+2\mu/\nu}i+j\right) \pi_{i,j}(\tilde{\rho_1}, \rho_2).\label{s_mod}
  \end{align}
 To complete the  proof for any $\nu>0$, it  amounts to proving  that, for $a>0$ and $K\in \N$, 
\begin{align*}
 S(x,y)= \frac{1}{Z(x,y)} \sum_{i+j\leq K}(i+2j)\frac{x^i}{i!}y^j
\end{align*}
is a strictly increasing function of $x\in [0,a[$ under $f(x,y)=0$,
where
\begin{align*}
 f(x,y)=(a-x)Z(x,y)-a\sum_{i=0}^K \frac{x^i}{i!}, \quad Z(x,y)=\sum_{i+j\leq K}\frac{x^i}{i!}y^j.
\end{align*}

Indeed, the ratio in the right-hand side of equation~\eqref{s_mod}
is a non increasing function from $(0,+\infty)$ to $(1/2,1)$. By linearity of differentiability and multiplication by a scalar, it  suffices to prove that the right-hand side of equation~\eqref{s_mod} is non-increasing as a function of $\tilde{\rho_1}$, for $\rho_2=\phi(\rho_1)$ defined in Lemma~\ref{Theorem:diffeomorphism} for the two values $1/2$ and $1$ of this ratio. For value $1$, it is exactly Lemma~\ref{sCroissante}. It remains to prove it for value $1/2$, which has been asserted previously.

Nevertheless,  due to simple reservation case (steps 2 to 4),  by continuity, the proof is complete for $\nu$ enough large.


\section*{Acknowledgement}
The authors would like to thank C\'edric Bourdais for his contribution to the analysis of this model during his internship at INRIA.


\end{document}